\let\footnote=\endnote
\newcommand{\npeople}{n}
\newcommand{\peopleSet}{\mathcal{N}}
\newcommand{\ngroups}{m}
\newcommand{\groupSet}{\mathcal{M}}
\newcommand{\sizeofgroups}{s}
\newcommand{\utility}{u}
\newcommand{\set}[1]{\left\{#1\right\}}
\newcommand{\paren}[1]{\left(#1\right)}
\newcommand{\pairwiseUtility}[2]{u_{#1,#2}}
\newcommand{\objweight}{\alpha}
\newcommand{\utilityAssignment}[2]{\mathsf{u}\paren{#1,#2}}
\newcommand{\uplift}{r}
\newcommand{\possibleGroups}{\mathcal{C}}
\newcommand{\possibleGroupsChar}{\mathcal{C}^{\mathcal{Q}}}
\newcommand{\possiblePartitions}{\mathcal{T}}
\newcommand{\individualMaxUtility}[1]{\mathcal{U}\paren{#1}}
\newcommand{\groupUtility}[1]{\mathsf{u}\paren{#1}}
\newcommand{\charSet}{\mathcal{Q}}
\newcommand{\possesChar}[2]{\delta_{#1,#2}}
\newcommand{\minChar}[1]{\emph{quota}_{#1}}
\newcommand{\maxChar}[1]{\emph{cap}_{#1}}
\newcommand{\RMP}[1]{RMP({#1})}
\newcolumntype{C}[1]{>{\centering\let\newline\\\arraybackslash\hspace{0pt}}m{#1}}
\def\old@comma{,}
	\old@comma\discretionary{}{}{}%
\newtheorem{theorem}{Theorem}
\newtheorem{proposition}{Proposition}
\begin{document}

\title{On Finding Stable and Efficient Solutions for the Team Formation Problem}

\author[1]{Hoda Atef Yekta}\thanks{hoda.atef\_yekta@uconn.edu}
\author[2]{David Bergman\thanks{david.bergman@uconn.edu}}
\author[3]{Robert Day\thanks{robert.day@uconn.edu}}
\affil[1,2,3]{\small Operations and Information Management, University of Connecticut}

\maketitle

\begin{abstract}
The assignment of personnel to teams is a fundamental and ubiquitous managerial function, typically involving several objectives and a variety of idiosyncratic practical constraints. Despite the prevalence of this task in practice, the process is seldom approached as a precise optimization problem over the reported preferences of all agents. This is due in part to the underlying computational complexity that occurs when quadratic (i.e., intra-team interpersonal) interactions are taken into consideration, and also due to game-theoretic considerations, when those taking part in the process are self-interested agents. Variants of this fundamental decision problem arise in a number of settings, including, for example, human resources and project management, military platooning, sports-league management, ride sharing, data clustering, and in assigning students to group projects. In this paper, we study a mathematical-programming approach to ``team formation'' focused on the interplay between two of the most common objectives considered in the related literature: economic efficiency (i.e., the maximization of social welfare) and game-theoretic stability (e.g., finding a core solution when one exists). With a weighted objective across these two goals, the problem is modeled as a bi-level binary optimization problem, and transformed into a single-level, exponentially sized binary integer program. We then devise a branch-cut-and-price algorithms and demonstrate its efficacy through an extensive set of simulations, with favorable comparisons to other algorithms from the literature. 
\\ \\
\smallskip
\noindent \textbf{Keywords.} Decision Analysis: Multiple Criteria; Organizational studies : Manpower planning; Utility-preference : Applications; Utility-Preference; Choice functions; Games-Group Decisions
\end{abstract}


%


\section{Introduction}
\label{sec:introduction}
Consider the task of assigning $n$ workers to $m$ teams of equal size (assume that $m$ evenly divides $n$). Each worker scores each co-worker, being asked to express an integer from 1 to (say) 10, with 10 being a most preferred teammate.

Through the application of constraints, it is easy to imagine many practical 
 variations of this basic setting. For example, perhaps there are $m$ project managers among the $n$ workers, and each team must have exactly one project manager. Each project manager is tied to a specific project plan, and workers consider the two together in their scoring. Variations with more constraints (e.g., at least one marketing person, at least two from IT, etc.) abound.

This process of putting people into teams and observing their satisfaction (or lack thereof) happens incredibly frequently in practice, yet obtaining maximum-score solutions for even modestly sized instances can be computationally prohibitive. Indeed, the maximization of the sum of the interpersonal scores among teammates is NP-Hard, and is too difficult to solve for even a few dozen workers, depending on how sophisticated of a formulation is used.

Beyond utility maximization, the economic concept of stability has played a major role in similar types of preference-reporting mechanisms. In many games, we ask if a subset of players could all improve their situation by breaking away from the game and getting together by themselves. Such a situation is called \emph{unstable}, and is considered bad for morale or for the survivability of the game as an institution. Further, it encourages the subset to collusively misstate their preferences to achieve the better outcome. (By reporting maximum scores for those in the subset, and zero for all others, group manipulation may often prove beneficial, undermining the goals of the mechanism.) Some job markets (modeled as marriage markets, such as medical-residency matching) have been shown in the classical literature to always admit stable solutions, sometimes many. Other simple markets may have no stable solution, such as in the \emph{stable roommate problem} (which can be modeled as our current problem with team-size $s=\frac{n}{m}=2$ and no constraints besides $s=2$).

This leads us to consider the problem of finding a solution that minimizes the amount that any subset (or \emph{coalition}) of players could mutually improve their situation by breaking away and forming a new valid team. We call this the \emph{maximum uplift coalition}, and consider the minimization of the maximum uplift. When the minimum is zero, the solution is stable, but even when no stable solution exists, the minimum provides a measure of instability.

A restriction to equal-sized teams is used throughout this paper; other constraints based on worker characteristics may or may not be included. Since the case when $m$ does not evenly divide $n$ but team sizes as close as possible to uniform are required can be modeled by adding dummy workers and a constraint enforcing that at most one dummy worker is assigned per team, we assume $m$ evenly divides $n$ throughout for simplicity. This (nearly-)equal-size teams assumption is crucial, driving the interesting behavior of this type of system (in contrast to hedonic games). With equal-size teams, individual actions have less influence, making group uplift a more interesting consideration than unilateral deviation. 

After a broad review of the literature (\S \ref{sec:litrev}), we provide a descriptive model (\S \ref{sec:tfp}) and then refine to a more usable single-level reformulation (\S \ref{sec:singleLevel}). We provide and then compare three algorithmic implementations (\S \ref{sec:BCP} and \S \ref{sec:experimentaleval}) before comparing to benchmark heuristics from the literature (\S \ref{sec:BenchComp}) across four distinct preference models.

\section{Related Literature}
\label{sec:litrev}
\subsection{Team Dynamics}
The study of interpersonal dynamics in teams has a huge stream of literature within the management and organizational psychology community. Though too deep to expound upon broadly here, \cite{gardner2017understanding}, for example, have considered formal models with quadratic objective terms based on pairwise utility realized among those individuals placed in the same group. See \cite{mathieu2015team} for a survey of the dynamics of people working in teams and their influence on team formation. One perspective is that a team planner will often have several practical constraints on acceptable teams, for example that each team must include a minimum number of people from a certain gender, ethnic group, or having a specific skill set \citep{Camp93}. Moreover, several models and algorithms have been developed to solve team partitioning problems that consider different forms of skill constraints \citep{FarSorHasHam11,GUTIERREZ2016150,CHEN20123888,grouping-students-in-educational-settings}. These provide support for our constraint-based architecture, with each team needing to satisfy quotas or caps on workers of particular skill-sets or other generic binary characteristics. 

\subsection{Graph Theory}
 Maximizing only the intra-group efficiency for teams of equal size is equivalent to the NP-hard \emph{balanced $k$-clique partitioning problem} (BCPP) \citep{BHASKER19911}, which has been used, for example, for sports tournament scheduling and league realignment \citep{Recalde2016}. This stream of literature focuses only the efficiency of the solution, with no consideration of stability.

\subsection{Matching}
With preferences submitted by self-interested agents, the current work is related to an extensive literature on ``matching markets'' which has become an active area of research with wide-spread applications. This work builds on the classical \emph{stable marriage problem} of \cite{gale1962college} \citep[see also][]{Gusfield:1989:SMP:68392,IRVING1985577,IwaMiy08} in which disjoint sets of men and women each rank members of the opposite gender (possibly ranking ``being alone'' above some potential mates). A \emph{stable} matching is often desired, in which no unmatched couple prefers each other over their current matching. Applications of bipartite marriage-type matching have flourished in recent years, with prominent successes in the National Residency Matching Program \citep{roth1996national}, school-choice programs \citep{pais2008school}, and kidney exchange \citep{roth2004kidney}, and even recognition of the discipline with the 2012 Nobel Prize in Economic Sciences. The tension between efficiency and stability (studied here) has been present in this stream of research, particularly in discussions of school-choice program implementations \citep{erdil2008s}. 

\subsubsection{Roommate Problem Variations}
As mentioned in Section~\ref{sec:introduction}, when the market does not consist of two disjoint classes of agents (e.g., men and women, workers and jobs, students and schools, etc.) but is instead drawn from a single pool of agents, a stable solution may not exist, even with ``teams'' of size two and no further constraints, i.e., the \emph{roommate problem} \citep{IRVING1985577}. Indeed, team formation has already been discussed as a natural generalization of the roommate problem \citep{BIRO201674}, but few have attempted to tackle all computational difficulties directly.

While a particular instance of the ($s=2$) roommate problem may not admit a stable solution, a polynomial-time algorithm \citep{DBLP:journals/dam/Irving94} can check for the existence of a stable solution and find one, if it exists. \cite{Prosser2014} present an alternative constraint-programming algorithm for this problem with partially-defined preference lists. Partially-stable outcomes in \emph{unsolvable} roommate problems (problems without stable matchings) have been defined; for example, an \emph{almost stable matching} consists of a Pareto-optimal matching with a minimum number of blocking pairs; a \emph{maximum-internally stable matching} is a solution with a maximal set of stable pairs; and a \emph{maximum irreversible matching} maximizes the \emph{number} of stable pairs \citep{Abraham2006,Tan1990,BIRO201674}. Also, \cite{van2016matching} recently explored an algorithm for the roommate problem in which people who are mutual favorites, i.e., \emph{soulmates}, are matched first, with the process then iterated.  To the best of knowledge of the authors, no formal study of determining whether or not a stable assignment exists (and producing one if it does exist) for $\sizeofgroups>2$ has previously been studied. Other generalizations of stable roommate problems include those by \cite{Cechlarova:2005:GSR:1077464.1077474}, where an agent may participate in more than one 2-person relationship.
Recently, \cite{wolfson2017fairness} study ride-sharing as an application of the roommate problem, proposing a heuristic algorithm for both efficiency and stability, but again studying only the special case of our current setting under $\sizeofgroups=2$.

It is worth noting here that much of the roommate and matching literature focuses on ordinal-preference elicitation (i.e., submission of ranked lists) while we use cardinal-preference elicitation (i.e., submission of numerical scores) for practical reasons. Because every cardinal submission can be transformed uniquely to a weak ordinal preference, we ignore this distinction for the remainder of the paper. 

\subsubsection{Hedonic Games}
This paper focuses on team formation with a restriction to \emph{equal-sized} teams, unlike \emph{hedonic games} \citep{Aziz:2011:SPA:2030470.2030497}. Negative preferences in hedonic games can result in agents not matched to teams, where here an equal-size constraint prohibits this. We thus normalize preferences to be nonnegative. Also of note is that a variety of stability concepts have been studied for hedonic games, including \emph{Nash stability}, \emph{individual stability}, and \emph{contractual individual stability}, respectively considering the benefit of leaving a current team and joining another team for each person of the market, for each person and the team receiving a defector, and for each person, the receiving team, and the team that was left \citep{Aziz:2011:SPA:2030470.2030497}.

Research has been done on the complexity of verification, existence, and calculation of solutions satisfying these notions of stability in hedonic games \citep{sung2010computational}, but the emphasis on individual deviations does not shed much light  on the difficult computational problems explored here. The equal-size constraint forces us to consider group deviation directly. Because one individual cannot defect and join another team without implications for other teams, it is important to consider improvements resulting from several trades taking place at once. Our notion of \emph{team uplift} considers the whole group of agents' preferences in forming a hypothetical alternative team. We consider individual deviation only in the appendix, showing that it seems to be less interesting with equal-size teams. 

\subsection{Team Formation: Benchmarks from the Literature}
\label{sec:litbench}
The most closely related work to our own is that of \cite{DBLP:journals/corr/WrightV15} who also explore mechanisms for team formation computationally. In the Harvard Business School Draft (\textbf{DRAFT}), agents are randomly ordered with the first $m$ agents selected as captains. Over $m-1$ rounds, each captain in turn selects her most preferred unassigned member to join her team, with the order reversing in even and odd numbered rounds. In the One-Player-One-Pick (\textbf{OPOP}) mechanism, all agents are randomly ordered with the first $m$ captains starting a team with only one pick from among the remaining non-captains. Then, each remaining non-captain in the ordering chooses her favorite team based on expected utility. If her team has another available spot, she also chooses the next person to join her team. \cite{DBLP:journals/corr/WrightV15} provide evidence in favor of these two mechanisms, making them the best benchmarks available for direct comparison in our computational experiments.

\section{The Team Formation Problem}
\label{sec:tfp}

Let $\peopleSet = \set{1, \ldots, \npeople}$ be a set of $\npeople$ agents with $\npeople=\ngroups \cdot \sizeofgroups$ and $\npeople,\ngroups,\sizeofgroups \in \mathbb{Z}^+$.  
For each $i\neq j \in \peopleSet$, let $\pairwiseUtility{i}{j} \in \mathbb{Z}^+$ represent the \emph{pairwise utility} of $i$ for being teamed with $j$, normalized to be non-negative and integer by affine transformation, with all $\pairwiseUtility{i}{i}=0$.
A feasible \emph{team formation} is a partition of $\peopleSet$ into $\ngroups$ teams $\groupSet = \set{1,\ldots,\ngroups}$, denoted by $\mathsf{t}: \peopleSet \rightarrow \groupSet$.
Hence $\mathsf{t}(i)$ is the team agent $i$ is assigned to, and the \emph{equal-size teams} restriction requires  $\vert\set{i \in \peopleSet \vert \mathsf{t}(i)=k}\vert=s$ for all $k\in\groupSet$. 
We will use $\possiblePartitions$ to denote the set of legal team formations, with the equal-size teams restriction assumed throughout.  Furthermore, let $\mathsf{c}(\mathsf{t},i) := \set{j : \mathsf{t}(j) = \mathsf{t}(i)}$ be the set of agents assigned to the same team as $i$ in $\mathsf{t}$.

It will be convenient to search over size-$\sizeofgroups$ subsets of $\peopleSet$ without specifying an entire team formation. Thus, let $\possibleGroups := \binom{\peopleSet}{\sizeofgroups}$
be the family of all subsets of $\peopleSet$ (called \emph{coalitions}) of size $\sizeofgroups$.
For any $i$, let $\possibleGroups(i) \subseteq \possibleGroups$ be those coalitions $c$ with $i\in c$.  
For any $c\in\possibleGroups(i)$, let $\utilityAssignment{c}{i}:=\sum_{j \in c} \pairwiseUtility{i}{j}$ signify $i$'s total individual utility as part of $c$.
As additional shorthand, we have $\utilityAssignment{\mathsf{t}}{i}:=
\utilityAssignment{\mathsf{c}(\mathsf{t},i)}{i}$ as $i$'s utility in a team formation and $\groupUtility{c}:=\sum_{i \in c} \sum_{j \in c} \utility_{i,j}$ as the utility of an entire coalition $c$.
We define each agent $i$'s maximum realizable utility $\individualMaxUtility{i}:=\max_{c \in \possibleGroups(i)} \utilityAssignment{c}{i}$, which can be found by simply taking the top $\sizeofgroups -1$ values of $\pairwiseUtility{i}{j}$ (until later when other constraints beyond equal-size are added).

As motivated above, we are interested in team formation with both total utility (efficiency) and stability as objectives.
Thus, our formal optimization problem, the \emph{team formation problem} (TFP) is formulated with both objectives, weighted by a scalar $\objweight \in [0,1]$ .  The first component (weighted by $\objweight$) seeks to maximize the sum of the individual utilities.
The second component (weighted by $1-\objweight$) seeks to minimize the maximum uplift $\uplift$, defined for any fixed $\mathsf{t} \in \possiblePartitions$ by a maximum of 
\[
\uplift(c,\mathsf{t}) := \sum_{i \in c} \paren{ \utilityAssignment{c}{i} - \utilityAssignment{\mathsf{t}}{i}},
\]
over all coalitions $c \in \possibleGroups$ with $\utilityAssignment{c}{i} \geq \utilityAssignment{\mathsf{t}}{i}$ for all $i \in c$. The TFP is thus modeled as:
\begin{align*}
\label{optmod:bilevel}
\tag{TFP}
&\text{max}_{\; \mathsf{t} \in \possiblePartitions}  
&  & \objweight \cdot \sum_{i \in \peopleSet} \utilityAssignment{\mathsf{t}}{i} - 
\paren{1-\objweight} \cdot \uplift
\\
& \text{s.t.} 
&& 
\uplift = \max_{c \in \possibleGroups} \set{\uplift(c,\mathsf{t}) : 
\paren{i \in c} 
\rightarrow 
\paren{ \utilityAssignment{c}{i} \geq \utilityAssignment{\mathsf{t}}{i}} }.
\end{align*}
The upper-level (leader) optimization model selects the partition and the lower-level (follower) optimization model calculates the maximum uplift coalition for the partition identified in the leader model.  The condition in the follower problem enforces that each individual in a maximum uplift coalition is not worse off.

Note that if $\objweight = 1$ we arrive at the BCPP, and if $\objweight = 0$ then stability (as measured by maximum uplift) is the only measure of interest. In the latter case, if the optimal solution is 0, then we have a fully stable solution; if positive, we have a solution that minimizes the maximum uplift.

Optimization model~(\ref{optmod:bilevel}) can be cast as a bi-level binary optimization problem by introducing a binary variable $x_{i,k}$ indicating if each agent $i$ is placed in team~$k$.  We also associate, in the follower, variables $y_i$, to indicate if person $i$ is selected in the maximum uplift coalition:
\begin{align*}
\label{optmod:bilevel-ip}
\tag{BL}
&\text{max} 
&  & \objweight \cdot \sum_{i \in \peopleSet} \sum_{j \in \peopleSet} 
			\sum_{k \in \groupSet} \pairwiseUtility{i}{j} x_{i,k} \cdot x_{j,k}
				 - 
\paren{1-\objweight} \cdot \uplift
\\
& \text{s.t.} 
&& \sum_{i \in \peopleSet} x_{i,k} = \sizeofgroups, 
&& \forall k \in \groupSet
\\
&&&
\sum_{k \in \groupSet} x_{i,k} = 1,
&&
\forall i \in \peopleSet
\\
&&&
x_{i,k} \in \set{0,1}, 
&& 
\forall i \in \peopleSet, \forall k \in \groupSet
\\
&&& 
\uplift = \max \Bigl\{ \sum_{i \in \peopleSet}\sum_{j \in \peopleSet} 
					\pairwiseUtility{i}{j} y_i y_j - 
					\sum_{i \in \peopleSet}  
					\sum_{j \in \peopleSet}
					\sum_{k \in \groupSet} y_i \pairwiseUtility{i}{j} x_{i,k}	x_{j,k}				
					 \Bigr. \\
&&& \hspace{2ex} \text{s.t.} 
					\quad 
					0 \leq \paren{1-y_i} \cdot \individualMaxUtility{i} + \sum_{j \in \peopleSet} 
					\pairwiseUtility{i}{j} y_i y_j -  
					\sum_{j \in \peopleSet}
					\sum_{k \in \groupSet} y_i \pairwiseUtility{i}{j} x_{i,k} x_{j,k}, 
&& \forall i \in \peopleSet \\
&&& \hspace{2ex} \sum_{i \in \peopleSet} y_i = \sizeofgroups, \\
&&& \hspace{2ex} y_i \in \set{0,1}, 
&&  \Bigl. \forall i \in \peopleSet  \Bigr\}.
\end{align*}

\begin{proposition}
	Model~(\ref{optmod:bilevel-ip}) is a valid formulation for the TFP.
\end{proposition}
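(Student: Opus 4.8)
The plan is to prove validity by exhibiting an objective-preserving correspondence between the feasible solutions of~(\ref{optmod:bilevel-ip}) and the feasible team formations of the TFP, separately at the leader and follower levels. First I would argue that the assignment matrices $x$ admitted by the two leader constraints are in bijection with $\possiblePartitions$. Given a feasible $x$, the constraint $\sum_{k \in \groupSet} x_{i,k} = 1$ lets me define $\mathsf{t}(i)$ to be the unique $k$ with $x_{i,k}=1$, and the constraint $\sum_{i \in \peopleSet} x_{i,k} = \sizeofgroups$ guarantees each team has exactly $\sizeofgroups$ members, so that $\mathsf{t} \in \possiblePartitions$; conversely, any $\mathsf{t} \in \possiblePartitions$ yields such an $x$, and this identification is clearly one-to-one.

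Next comes the core algebraic reduction: the teammate-indicator identity $\sum_{k \in \groupSet} x_{i,k} x_{j,k} = 1$ when $\mathsf{t}(i) = \mathsf{t}(j)$ and $0$ otherwise, since the product $x_{i,k} x_{j,k}$ is nonzero only for the common team and no two distinct teams can both contain $i$. Summing against $\pairwiseUtility{i}{j}$ gives $\sum_{j \in \peopleSet}\sum_{k \in \groupSet} \pairwiseUtility{i}{j} x_{i,k}x_{j,k} = \sum_{j \in \mathsf{c}(\mathsf{t},i)} \pairwiseUtility{i}{j} = \utilityAssignment{\mathsf{t}}{i}$ (the $j=i$ term vanishing since $\pairwiseUtility{i}{i}=0$). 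Hence the leader's first objective term equals $\objweight \sum_{i \in \peopleSet} \utilityAssignment{\mathsf{t}}{i}$, matching the efficiency term of the TFP.

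For the follower I would set up the analogous bijection between binary vectors $y$ with $\sum_{i \in \peopleSet} y_i = \sizeofgroups$ and coalitions $c := \set{i : y_i = 1} \in \possibleGroups$. Using the same identity, the term $\sum_{j \in \peopleSet} \pairwiseUtility{i}{j} y_i y_j$ equals $y_i \utilityAssignment{c}{i}$ and the term $\sum_{j \in \peopleSet}\sum_{k \in \groupSet} y_i \pairwiseUtility{i}{j} x_{i,k} x_{j,k}$ equals $y_i \utilityAssignment{\mathsf{t}}{i}$, so the follower objective collapses to $\sum_{i \in c}\paren{\utilityAssignment{c}{i} - \utilityAssignment{\mathsf{t}}{i}} = \uplift(c,\mathsf{t})$. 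It then remains to check that the follower constraint reproduces the individual-rationality restriction $\utilityAssignment{c}{i} \geq \utilityAssignment{\mathsf{t}}{i}$ for all $i \in c$. Fixing $i$: if $y_i = 1$, the term $\paren{1-y_i}\individualMaxUtility{i}$ vanishes and the inequality reads exactly $\utilityAssignment{c}{i} \geq \utilityAssignment{\mathsf{t}}{i}$; if $y_i = 0$, every $y_i$-weighted term vanishes and the inequality reduces to $0 \leq \individualMaxUtility{i}$, which holds because utilities are normalized to be non-negative. Thus the constraint is binding precisely on members of $c$ and slack for non-members, so the follower's feasible set is exactly the individually rational coalitions of the TFP, and its maximum computes the same $\uplift$.

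Combining the pieces, for any leader choice the two models share an identical inner maximum $\uplift$ and an identical leader objective, so they have the same feasible pairs and the same objective values, which establishes validity. I expect the main subtlety to be the follower-constraint bookkeeping: confirming that the $\paren{1-y_i}\individualMaxUtility{i}$ offset, together with non-negativity of the $\pairwiseUtility{i}{j}$ (hence of $\individualMaxUtility{i}$), switches the individual-rationality condition off for agents outside the coalition without ever excluding a genuinely feasible coalition. A minor accompanying point worth recording is that the follower is always feasible --- taking $c$ to be any existing team gives $\uplift = 0$ --- so the inner maximum is well defined and non-negative.
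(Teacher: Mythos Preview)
Your proposal is correct and follows essentially the same route as the paper's own proof: both argue that the leader constraints encode a valid team formation with the efficiency term matching, and then verify that the follower objective equals $\uplift(c,\mathsf{t})$ while the first follower constraint toggles the individual-rationality condition on exactly for $i\in c$ and is slack otherwise. Your version is somewhat more explicit---spelling out the bijections, the teammate-indicator identity $\sum_k x_{i,k}x_{j,k}=\mathbf{1}[\mathsf{t}(i)=\mathsf{t}(j)]$, and the feasibility/non-negativity of the inner maximum---but the logical structure is the same.
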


\begin{proof}
Leader problem constraints ensure a team formation, with total utility weighted by $\objweight$ in the objective. We need only show that the follower problem identifies the maximum uplift group. 

Fix a solution to the leader problem $x'$.  Let $y'$ be a feasible solution to the follower.  By the constraint $\sum_{i \in \peopleSet} y_i = \sizeofgroups$, a properly sized coalition is identified. The first term in the follower objective function, $\sum_{i \in \peopleSet}\sum_{j \in \peopleSet} 
\pairwiseUtility{i}{j} y_i y_j$, is the total coalitional utility over every $i$ with $y'_i = 1$.  The second term evaluates to $\sum_{i \in \peopleSet}  
\sum_{j \in \peopleSet}
\sum_{k \in \groupSet} y'_i \pairwiseUtility{i}{j} x'_{i,k}x'_{j,k}$.  Therefore, for a fixed $i$, $y'_i \pairwiseUtility{i}{j} x'_{i,k}x'_{j,k}$ will evaluate to 0 if $y'_i = 0$, and to $\sum_{j \in \peopleSet}
\sum_{k \in \groupSet}  \pairwiseUtility{i}{j} x'_{i,k}x'_{j,k}$ if $y'_i = 1$.  In the latter case, this term is the utility of $i$ according to $x'$ in the leader problem.  Therefore, the $(1-\objweight)$ objective term evaluates to the total uplift of the coalition defined $y'_i$ above the $x'$ value.

Finally, the constraint	$0 \leq \paren{1-y_i} \cdot \individualMaxUtility{i} + \sum_{j \in \peopleSet} 
\pairwiseUtility{i}{j} y_i y_j -  
\sum_{j \in \peopleSet}
\sum_{k \in \groupSet} y_i \pairwiseUtility{i}{j} x_{i,k} x_{j,k}$ becomes trivially satisfied if $y'_i = 0$ (by the maximality of $\individualMaxUtility{i}$).  If $y'_i = 1$, the constraint enforces that each individual chosen must have an increase in utility to want to join an uplift coalition, completing the proof.
$\Box$
\end{proof}

\section{Single-level Reformulations}
\label{sec:singleLevel}

In order to design an efficient algorithm for solving~(\ref{optmod:bilevel-ip}) (which is bi-level, quadratically constrained and has a quadratic objective function), we transform it into a single-level linear binary optimization model with the addition of two new sets of binary variables.  For every pair $i, j \in \peopleSet$, let $w_{i,j}$ indicate if $i$ and $j$ are assigned to the same team (if $i=j$ let $w_{i,j}=0$, or simply ignore it).  Next, for every $i$ and value $v \in \set{0, 1, \ldots, \individualMaxUtility{i}}$, introduce variables $z_{i,v}$ indicating if $i$ has individual utility $v$ in the team assigned to $i$:
\begin{align*}
\label{optmod:singleLevel-PPS}
\tag{SL}
&\text{max} 
&  & \objweight \cdot \sum_{i \in \peopleSet} \sum_{j \in \peopleSet}  \pairwiseUtility{i}{j} w_{i,j}
- 
\paren{1-\objweight} \cdot \uplift
\\
& \text{s.t.} 
&& \sum_{i \in \peopleSet} x_{i,k} = \sizeofgroups, 
&& \forall k \in \groupSet
\\
&&&
\sum_{k \in \groupSet} x_{i,k} = 1,
&&
\forall i \in \peopleSet
\\
&&&
w_{i,j} \geq x_{i,k} + x_{j,k} - 1
,
&&
\forall i \in \peopleSet, \forall j \in \peopleSet,  \forall k \in \groupSet
\\
&&& 
w_{i,j} + x_{i,k} \leq x_{j,k} + 1,
&&
\forall i \in \peopleSet, \forall j \in \peopleSet, \forall k \in \groupSet
\\
&&&
\sum_{v = 0}^{\individualMaxUtility{i}} z_{i,v} = 1, 
&&
\forall i \in \peopleSet
\\
&&&
\sum_{v = 0}^{\individualMaxUtility{i}} v \cdot z_{i,v}
=
\sum_{j \in \peopleSet} \utility_{i,j} w_{i,j}, 
&&
\forall i \in \peopleSet
\\
&&&
\uplift \geq \groupUtility{c} 
-
\sum_{i \in c} \sum_{v = 0}^{\individualMaxUtility{i}} 
v \cdot z_{i,v}
-
\sum_{i \in c}
\sum_{v = \utilityAssignment{c}{i} + 1}^{\individualMaxUtility{i}}
\groupUtility{c} \cdot z_{i,v},
&&
\forall c \in \possibleGroups
\\
&&&
x_{i,k} \in \set{0,1}, 
&& 
\forall i \in \peopleSet, \forall k \in \groupSet
\\
&&&
w_{i,j} \in \set{0,1}, 
&&
\forall i \in \peopleSet, j \in \peopleSet
\\
&&&
z_{i,v} \in \set{0,1}, 
&&
\forall i \in \peopleSet, \forall v \in \set{0, 1, \ldots, \individualMaxUtility{i}}.
\end{align*}

\begin{theorem}
	Model~(\ref{optmod:singleLevel-PPS}) is a valid formulation for the TFP.
\end{theorem}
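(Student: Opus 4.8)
The plan is to show that every feasible $x$ uniquely and correctly determines the auxiliary variables $w$ and $z$, and then that the single variable $\uplift$ is forced by the coalition constraints to equal exactly the maximum uplift, so that the (SL) objective coincides with the (TFP) objective evaluated at the team formation $\mathsf{t}$ induced by $x$. Since the first two constraint families are identical to those in~(\ref{optmod:bilevel-ip}) and already encode a legal $\mathsf{t}\in\possiblePartitions$, I would only need to verify the new constructs.

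First I would check that the linearization of $w$ is exact: the pair $w_{i,j}\ge x_{i,k}+x_{j,k}-1$ and $w_{i,j}+x_{i,k}\le x_{j,k}+1$ force $w_{i,j}=1$ precisely when $i,j$ share a team (the first inequality pins it to $1$ at the common $k$) and $w_{i,j}=0$ otherwise (the second pins it to $0$ at any $k$ that separates them). Hence $\sum_{j}\pairwiseUtility{i}{j}w_{i,j}=\utilityAssignment{\mathsf{t}}{i}$, so the $\objweight$-weighted term equals the social welfare $\sum_{i}\utilityAssignment{\mathsf{t}}{i}$. The constraints $\sum_{v}z_{i,v}=1$ and $\sum_{v}v\,z_{i,v}=\sum_{j}\pairwiseUtility{i}{j}w_{i,j}$ then force the unique active index to satisfy $z_{i,v}=1\iff v=\utilityAssignment{\mathsf{t}}{i}$, which is well defined since $\utilityAssignment{\mathsf{t}}{i}\le\individualMaxUtility{i}$.

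The crux is the family of uplift constraints. Substituting $\sum_{v}v\,z_{i,v}=\utilityAssignment{\mathsf{t}}{i}$ and using $\groupUtility{c}=\sum_{i\in c}\utilityAssignment{c}{i}$, each constraint reads $\uplift\ge\groupUtility{c}-\sum_{i\in c}\utilityAssignment{\mathsf{t}}{i}-\groupUtility{c}\cdot\big|\set{i\in c:\utilityAssignment{\mathsf{t}}{i}>\utilityAssignment{c}{i}}\big|$, because the inner sum $\sum_{v=\utilityAssignment{c}{i}+1}^{\individualMaxUtility{i}}z_{i,v}$ equals $1$ exactly when $i$'s realized utility $\utilityAssignment{\mathsf{t}}{i}$ strictly exceeds its would-be utility $\utilityAssignment{c}{i}$ in $c$, and $0$ otherwise. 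I would then split into cases. If $c$ is a \emph{valid} blocking coalition, i.e.\ $\utilityAssignment{c}{i}\ge\utilityAssignment{\mathsf{t}}{i}$ for all $i\in c$, the count is zero and the constraint collapses to $\uplift\ge\groupUtility{c}-\sum_{i\in c}\utilityAssignment{\mathsf{t}}{i}=\uplift(c,\mathsf{t})$. If instead some agent of $c$ is strictly worse off, the count is at least $1$, so the right-hand side is at most $-\sum_{i\in c}\utilityAssignment{\mathsf{t}}{i}\le 0$ by nonnegativity of utilities, rendering the constraint vacuous.

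Finally I would observe that each of the $\ngroups$ teams of $\mathsf{t}$ is itself a coalition $c\in\possibleGroups$ with $\utilityAssignment{c}{i}=\utilityAssignment{\mathsf{t}}{i}$, hence a valid coalition contributing the floor $\uplift\ge 0$; so the vacuous constraints never bind below the active ones. Since $\uplift$ is minimized in the objective (its weight $1-\objweight$ is subtracted), at optimality $\uplift$ equals the largest active right-hand side, namely $\uplift=\max\set{\uplift(c,\mathsf{t}):\utilityAssignment{c}{i}\ge\utilityAssignment{\mathsf{t}}{i}\ \forall i\in c}$, exactly the maximum uplift computed by the follower of~(\ref{optmod:bilevel-ip}). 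Thus (SL) evaluates to the (TFP) objective for every legal $\mathsf{t}$, and optimizing it solves the TFP. The main obstacle is the deactivation argument of the third step: verifying that the coefficient $\groupUtility{c}$ is a large enough ``big-$M$'' to neutralize every infeasible-direction coalition (here it is tight, since $\uplift(c,\mathsf{t})\le\groupUtility{c}$) while never spuriously loosening a valid one, together with confirming that the $\uplift\ge 0$ floor from the teams of $\mathsf{t}$ makes the minimization select the true maximum.
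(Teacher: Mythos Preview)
Your argument is correct and follows essentially the same route as the paper's proof: you verify that the $x$--$w$ linking constraints recover the same-team indicator, that the $z$ constraints encode each agent's realized utility, and that the coalition constraints are binding exactly for individually rational coalitions (with the big-$M$ term $\groupUtility{c}$ deactivating the rest), so that at optimality $\uplift$ equals the follower's maximum uplift. Your treatment is in fact a bit more explicit than the paper's---you actually compute the bound $-\sum_{i\in c}\utilityAssignment{\mathsf{t}}{i}\le 0$ in the deactivation case and flag the tightness of the big-$M$---but the structure and key ideas are the same.
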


\begin{proof}
The first four sets of constraints generate a partition and link the $x$ variables with the $w$ variables. Note that the second constraint set is necessary to link the $x$ and $w$ variables, forcing there to be at least one $k$ for which $w_{i,j}$ is constrained to zero when $i$ and $j$'s teams differ.

The constraints $\sum_{v = 0}^{\individualMaxUtility{i}} z_{i,v} = 1$ and
$
\sum_{v = 0}^{\individualMaxUtility{i}} v \cdot z_{i,v}
=
\sum_{j \in \peopleSet} \utility_{i,j} w_{i,j}, 
$
define the $z$ variables, which select exactly one value from the domain of the agent's utility function and match it to the $w$ variable's generation of utility as the sum of pairwise actualizations.

The final constraint set (aside from binary constraints) links the uplift variable $\uplift$ to the uplift of each coalition.  In particular, for every coalition $c$, the first term on the right-hand side is the total utility for $c$.  The second term calculates, for each $i$ in the coalition, the individual utility realized in the team formation obtained endogenously (by $x$ and hence $w$ and $z$ variables).  The third term ensures that the constraint does not restrict the value of $\uplift$ unless it is an \emph{individually rational} uplift group for all $i \in c$. That is, if $z_{i,v} = 1$ for a value $v>\utilityAssignment{c}{i}$, then $i$ would get more utility from the endogenous team formation than from the potential coalition $c$, and so would not join $c$. An active $z_{i,v}$ variable in the third term causes the entire right-hand side to be non-positive, hence trivially satisfied; if even one agent would not join $c$ it is not considered as a stability threat.

Note that these constraints enforce $\uplift\geq 0$; for any group $c$ in the solution defined by $x$, the right-hand side evaluates to 0. With the coefficient of $\uplift$ in the objective function non-positive, $\uplift$ will take the maximum value of the right-hand sides over this constraint set. Any tight constraint from this set corresponds to a maximum uplift coalition in the solution defined by $x$.  $\Box$
\end{proof}

An alternative, exponentially sized model can be formulated, which provides a tighter linear relaxation, at the expense of model size.  This is done by creating a binary variable $t_c$  for every $c \in \possibleGroups$, indicating whether or not this coalition is part of the team formation.  
\begin{align*}
\label{optmod:singleLevel-EXP}
\tag{EXP}
&\text{max} 
&  & \objweight \cdot \sum_{c \in \possibleGroups} \groupUtility{c} t_c
- 
\paren{1-\objweight} \cdot \uplift
\\
& \text{s.t.} 
&& \sum_{c \in \possibleGroups} t_c = \ngroups, 
\\
&&&
\sum_{c \in \possibleGroups(i)} t_c = 1,
&&
\forall i \in \peopleSet
\\
&&&
w_{i,j} = 
\sum_{c \in \possibleGroups(i) \cap \possibleGroups(j) }
t_c,
&&
\forall i \in \peopleSet, j \in \peopleSet, j \neq i
\\
&&&
\sum_{v = 0}^{\individualMaxUtility{i}} z_{i,v} = 1, 
&&
\forall i \in \peopleSet
\\
&&&
\sum_{v = 0}^{\individualMaxUtility{i}} v \cdot z_{i,v}
=
\sum_{j \in \peopleSet} \utility_{i,j} w_{i,j}, 
&&
\forall i \in \peopleSet
\\
&&&
\uplift \geq \groupUtility{c} 
-
\sum_{i \in c} \sum_{v = 0}^{\individualMaxUtility{i}} 
v \cdot z_{i,v}
-
\sum_{i \in c}
\sum_{v = \utilityAssignment{c}{i} + 1}^{\individualMaxUtility{i}}
\groupUtility{c} \cdot z_{i,v},
&&
\forall c \in \possibleGroups
\\
&&&
w_{i,j} \in \set{0,1}, 
&&
\forall i \in \peopleSet, j \in \peopleSet, j \neq i
\\
&&&
z_{i,v} \in \set{0,1}, 
&&
\forall i \in \peopleSet, \forall v \in \set{0, 1, \ldots, \individualMaxUtility{i}}
\\
&&&
t_c \in \set{0,1}
&&
\forall c \in \possibleGroups
.
\end{align*}
This model replaces the $x$ variables with $t$ variables to define the team formation, and the $x$ and $w$ linking constraints with $w_{i,j} = 
\sum_{c \in \possibleGroups(i) \cap \possibleGroups(j) }
t_c$ , linking $t$ variables with $w$ variables. 

\subsection{Characteristic Constraints}
\label{sec:charCons}

As noted in \S \ref{sec:litrev} diversity of skill-sets or types is a primary concern when establishing teams.  For example, a company may want teams with individuals from diverse functional areas, with varying \emph{Myers-Briggs Type Indicator} scores \citep{MTBI}, or with different expertise.  With student teams, diversity with respect to gender, skills, or roles might be desired.   

In general we consider constraints of the form, ``each team must have a specified minimum (or maximum) number of agents with characteristic $q$.'' Formally, suppose that there are a set of characteristics $\charSet$ that each individual will either possess of not.  This is indicated by the binary parameter $\possesChar{i}{q}$ equal to 1 if and only if agent $i$ has characteristic $q$. For each characteristic $q \in \charSet$, there is a specified $\minChar{q}$ and $\maxChar{q}$, with $0 \leq \minChar{q} \leq \maxChar{q} \leq \sizeofgroups$, establishing bounds on the number of individuals possessing each characteristic that must be represented in each group. 

These bounds are easily appended to the models above. For model with $x$ variables, namely models~(\ref{optmod:bilevel-ip}) and~(\ref{optmod:singleLevel-PPS}), add:
\[
\minChar{q}
\leq 
\sum_{i \in \peopleSet}\possesChar{i}{q}x_{i,k}, 
\leq 
\maxChar{q}
\qquad
\forall q \in \charSet,
 \forall k \in \groupSet
\]
to enforce this condition. For (\ref{optmod:singleLevel-PPS}) and~(\ref{optmod:singleLevel-EXP}), with variables $t_c$, we simply refine $\possibleGroups$ to contain only those groups $c$ for which these conditions hold.  Call this set $\possibleGroupsChar$.

\section{Optimization Algorithms}
\label{sec:BCP}

The models (\ref{optmod:singleLevel-PPS}) and~(\ref{optmod:singleLevel-EXP}) are computationally challenging in practice.  Both contain, at a minimum, a pseudo-polynomial number of variables and an exponential number of constraints.  This requires the design of algorithms capable of scaling to instances of practical size.  We make use of branch-and-bound search, as is typically employed for binary optimization problems, with added routines for handling the exponentially sized portions of the model. 

This section provides a description of two optimization algorithms for TFP, one designed to solve model~(\ref{optmod:singleLevel-PPS}) and one for model~(\ref{optmod:singleLevel-EXP}).  To begin, we describe a class of optimization problems that will appear as subproblems throughout these two primary algorithms.  The \emph{cardinality-constrained binary quadratic programming problem} (CCBQP) is specified by an asymmetric, not necessarily positive semi-definite, $\nu \times \nu$
matrix $Q$, and a value $K$:
\begin{align*}
\label{optmod:ccbqp}
\tag{CCBQP}
&
\text{max}
&&
\chi^T Q \chi  \\
& \text{s.t.}
&&
\sum_{i=1}^n \chi_i = K 
\\
&&&
A\chi \geq b \\
&&&
\chi_i \in \set{0,1}, 
&&
\forall i \in \set{1, \ldots, \nu},
\end{align*}
where the additional linear constraint set $A\chi \geq b$ may be vacuous.
Recent literature has investigated effective computational models for solving various instance types of CCBQP problems.
In particular, model 2 of \cite{Lima2017} proved most efficient  in our experimental results (using commercial IP solvers) over a wide-range of instances, and so our presented results employ the following reformulation throughout. Introduce binary variables $\psi_{i,j}$ for every pair of indices $i,j \in \set{1, \ldots, \nu}$ with $i < j$, and reformulate~(\ref{optmod:ccbqp}) as follows:
\begin{align*}
\label{optmod:ccbqp2}
\tag{CCBQP*}
&
\text{max}
&&
 \sum_{i = 1}^{\nu}  Q_{i,i}\chi_i + \sum_{i = 1}^{\nu - 1} \sum_{j = i+1}^{\nu}  \paren{Q_{i,j} + Q_{j,i}} \psi_{i,j} \\
& \text{s.t.}
&&
\sum_{i=1}^\nu \chi_i = K 
\\
&&&
\psi_{i,j} \geq \chi_i + \chi_j - 1, 
&&
\forall i,j \in \set{1, \ldots, \nu}, i < j
\\
&&&
\psi_{i,j} \leq \chi_i, 
&&
\forall i,j \in \set{1, \ldots, \nu}, i < j
\\
&&&
\psi_{i,j} \leq  \chi_j , 
&&
\forall i,j \in \set{1, \ldots, \nu}, i < j
\\
&&&
\sum_{i=1}^{j-1} \psi_{i,j} + \sum_{i=j+1}^\nu \psi_{j,i} = \paren{K-1} \chi_j  , 
&&
\forall j \in \set{1, \ldots, \nu}
\\
&&&
A\chi \geq b \\
&&&
\chi_i \in \set{0,1}, 
&&
\forall i \in \set{1, \ldots, \nu}.
\end{align*}
  
We now describe three proposed algorithms for the TFP, each a branch-and-bound algorithm utilitizing a model formulated in Section~\ref{sec:singleLevel}.

\subsection{Branch and cut (\textbf{BC})}

\textbf{BC} is a branch-and-cut algorithm for solving model~(\ref{optmod:singleLevel-PPS}). All computational models for the TFP presented in this paper contain the family of uplift-defining constraints: 
\begin{equation}
\label{exp-con}
\tag{UP($c$)}
\uplift \geq \groupUtility{c} 
-
\sum_{i \in c} \sum_{v = 0}^{\individualMaxUtility{i}} 
v \cdot z_{i,v}
-
\sum_{i \in c}
\sum_{v = \utilityAssignment{c}{i} + 1}^{\individualMaxUtility{i}}
\groupUtility{c} \cdot z_{i,v},
\quad \forall c \in \possibleGroups.
\end{equation}
As opposed to adding all~\ref{exp-con} constraints at once, we propose a \emph{branch-and-cut} approach for finding constraints that might impact the optimal solution.  Namely, at each integer-search-tree node, an optimization problem is solved to identify if there exists any $c \in \possibleGroups$ for which the $c$-indexed constraint UP($c$) is violated. If such a \emph{violated constraint} exists, it is added to the model and the branch-and-bound search continues. 

\begin{proposition}
	\label{prop:exp-con}
At any integer search-tree node, for either~(\ref{optmod:singleLevel-PPS}) or~(\ref{optmod:singleLevel-EXP}), let $\uplift'$ be the value of $\uplift$ and, $\forall i \in \peopleSet$, let $v'_i$ be the unique second index for which variable $z_{i,v}$ is 1.  Let $\chi^*$ be the optimal solution to the following problem, with optimal objective value $\uplift^*$:
\begin{align*}
\label{optmodel:findViolatedCon}
\tag{VC}
&
\text{max}
&&
\sum_{i \in \peopleSet} \sum_{j \in \peopleSet} \utility_{i,j} \chi_i \chi_j - \sum_{i \in \peopleSet} v'_i \chi_i \\
& \text{s.t.}
&&
\sum_{i=1}^n \chi_i = \sizeofgroups 
\\
&&&
\sum_{j \in \peopleSet}
\utility_{i,j} \chi_j 
\geq
v'_i \cdot \chi_i
,
&&
\forall i \in \peopleSet 
\\
&&&
\chi_i \in \set{0,1}, 
&&
\forall i \in \peopleSet.
\end{align*}
There exists a violated~\ref{exp-con} constraint if and only if $\uplift^* > \uplift'$. Furthermore, if $\uplift^* > \uplift'$, and $c^* := \set{i : \chi^*_i = 1}$, then UP($c^*$) is a violated constraint.
\end{proposition}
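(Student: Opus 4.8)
The plan is to show that problem~(\ref{optmodel:findViolatedCon}) is exactly a separation oracle for the constraint family~(\ref{exp-con}): its feasible region encodes precisely the \emph{individually rational} size-$\sizeofgroups$ coalitions, and its objective equals the uplift of such a coalition relative to the incumbent team formation. First I would interpret the fixed data at the node. Since the constraints $\sum_{v} z_{i,v}=1$ and $\sum_{v} v\cdot z_{i,v}=\sum_{j}\utility_{i,j}w_{i,j}$ force a unique active variable $z_{i,v'_i}$, the index $v'_i$ equals agent $i$'s realized utility $\utilityAssignment{\mathsf{t}}{i}$ in the team formation $\mathsf{t}$ defined by the integer node.

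Next I would evaluate the right-hand side of UP($c$) at this node for an arbitrary coalition $c$. Substituting $z_{i,v'_i}=1$, the first two terms collapse to $\groupUtility{c}-\sum_{i\in c}v'_i=\uplift(c,\mathsf{t})$, using $\groupUtility{c}=\sum_{i\in c}\utilityAssignment{c}{i}$. The third term contributes a full $\groupUtility{c}$ for each $i\in c$ with $v'_i>\utilityAssignment{c}{i}$. This yields the key dichotomy: if $c$ is individually rational, meaning $\utilityAssignment{c}{i}\geq v'_i$ for every $i\in c$, then the right-hand side is exactly $\uplift(c,\mathsf{t})$; otherwise at least one $\groupUtility{c}$ is subtracted and the right-hand side is non-positive.

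I would then identify~(\ref{optmodel:findViolatedCon}) with maximizing uplift over individually rational coalitions. For $\chi$ the indicator of $c=\set{i:\chi_i=1}$, the cardinality constraint forces $|c|=\sizeofgroups$, the objective equals $\groupUtility{c}-\sum_{i\in c}v'_i=\uplift(c,\mathsf{t})$, and the constraint $\sum_{j}\utility_{i,j}\chi_j\geq v'_i\chi_i$ is vacuous for $i\notin c$ while becoming exactly $\utilityAssignment{c}{i}\geq v'_i$ for $i\in c$, i.e.\ individual rationality. Thus $\uplift^*=\max\set{\uplift(c,\mathsf{t}):|c|=\sizeofgroups,\ c\text{ individually rational}}$. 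A crucial auxiliary observation is that any actual team of $\mathsf{t}$ is a feasible solution with objective $0$, since each member satisfies $\utilityAssignment{c}{i}=v'_i$; hence $\uplift^*\geq 0$ always.

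Finally I would assemble the equivalence. For the reverse direction, if $\uplift^*>\uplift'$ then $c^*$ is individually rational of size $\sizeofgroups$ with UP($c^*$) right-hand side equal to $\uplift^*>\uplift'$, so UP($c^*$) is violated, which simultaneously proves the ``furthermore'' clause. For the forward direction, suppose some UP($c$) is violated, so $\uplift'$ lies strictly below its right-hand side; by the dichotomy that right-hand side is at most $\uplift^*$ (if $c$ is individually rational it equals $\uplift(c,\mathsf{t})\leq\uplift^*$ by feasibility of $c$ in~(\ref{optmodel:findViolatedCon}); otherwise it is non-positive and $\uplift^*\geq0$), so $\uplift'<\uplift^*$. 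The main obstacle is the careful handling of the third (``individual rationality'') term of UP($c$): one must verify both that a single non-rational agent already drives the entire right-hand side non-positive and that this switching behavior exactly mirrors the feasibility constraint of~(\ref{optmodel:findViolatedCon}), so that the coalitions discarded by the separation problem are precisely those whose UP constraints can never be violated.
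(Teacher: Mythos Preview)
Your proposal is correct and follows exactly the approach the paper intends: you show that (\ref{optmodel:findViolatedCon}) is a separation oracle for the~\ref{exp-con} family, with feasibility encoding individual rationality and the objective equaling the uplift, and you carefully handle both directions of the ``if and only if'' including the non-individually-rational case via the observation $\uplift^*\geq 0$. The paper itself declares the proof ``immediate'' and gives only a one-sentence justification, so your write-up simply fills in the details the authors leave implicit.
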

The proof of this proposition is immediate\textemdash the mathematical program (\ref{optmodel:findViolatedCon}) finds the group $c^*$ for which each individual has a non-decreasing uplift (enforced by the constraints $\sum_{j \in \peopleSet}
\utility_{i,j} \chi_j 
\geq
v'_i \cdot \chi_i$) corresponding to the most-violated constraint. Model (\ref{optmodel:findViolatedCon}) is a special case of the CCBQP and so can be solved via model~(\ref{optmod:ccbqp2}). We note that if characteristic constraints are considered they can be directly added to the mathematical program in the statement of Proposition~\ref{prop:exp-con} by adding constraints:
\[
\minChar{q}
\leq 
\sum_{i \in \peopleSet}\possesChar{i}{q}\chi_{i}, 
\leq 
\maxChar{q},
\forall q \in \charSet.
\]

Equipped with Proposition~\ref{prop:exp-con} we can formally describe our first proposed algorithm \textbf{BC}, which solves model~(\ref{optmod:singleLevel-PPS}) by branch-and-cut. Starting with none of the constraints~\ref{exp-con}, a branch-and-bound search solves model~(\ref{optmod:singleLevel-PPS}).  At any integer-search-tree node, the optimization model in Proposition~\ref{prop:exp-con} is solved to find if there exists a violated constraint.  If one exists, the constraint UP($c^*$) is added to the model and the search continues.  Otherwise, the solution identified at the node is feasible, and a potentially improving solution.  

\subsection{Branch-cut-and-price (\textbf{BCP})}

\textbf{BCP} is a branch-cut-and-price algorithm for solving~(\ref{optmod:singleLevel-EXP}).  One simple method, which we donote by \textbf{EXP}, is enumerating all of $\possibleGroups$ (or $\possibleGroupsChar$ if $\maxChar{}$s or $\minChar{}$s are in use) and directly solving~(\ref{optmod:singleLevel-EXP}) using a state-of-the-art integer programming solver. \textbf{EXP} has the obvious shortcoming of scalability\textemdash  $|\possibleGroups|$ grows exponentially with the number of people, hence limiting the use in practical application.  

To address this shortcoming, we describe a branch-cut-and-price algorithm, \textbf{BCP}, that generates variables and constraints dynamically, as needed.  Note that not only does model~(\ref{optmod:singleLevel-EXP}) contain an exponential number of variables and an exponential number of constraints, but there is a family of constraints that contain exponentially many variables, requiring particular care in implementation.  We will interchangeably refer to the variables $t_c$ as \emph{variables} or \emph{columns}. 

\textbf{BCP} is initialized by finding any set of coalitions $\possibleGroups^0 \subseteq \possibleGroups$ representing a team formation. For any current active $\possibleGroups' \subseteq \possibleGroups$, starting with $\possibleGroups^0$, define the \emph{restricted master problem}, $\RMP{\possibleGroups'}$, as exactly formulation (\ref{optmod:singleLevel-EXP}) with $\possibleGroups$ replaced by $\possibleGroups'$, implying a restricted set of both $t_c$ columns and \ref{exp-con} constraints. 
Let $o^*(\possibleGroups')$ denote the optimal value of $\RMP{\possibleGroups'}$, with $r^*(\possibleGroups'), t^*_{c}(\possibleGroups')$, and $w^*_{i,j}(\possibleGroups')$ as the associated variables values at the particular optimal solution. To isolate the efficiency component of the objective, let $u^*(\possibleGroups'):= \frac{o^*(\possibleGroups') +  \paren{1-\objweight} r^*(\possibleGroups')}{\alpha}$.  Note that with $\possibleGroups' = \possibleGroups$, all these values correspond to the true optimal solution (of the unrestricted problem), in which case we may drop the argument. For the linear relaxation of  $\RMP{\possibleGroups'}$ (replacing each $\in \set{0,1}$ with $\in [0,1]$), we denote the optimal value and solution by adding carets (or hats)\textemdash for example, $\hat{o}^*(\possibleGroups')$ refers to the optimal value of the linear relaxation of $\RMP{\possibleGroups'}$.

With $\possibleGroups^0$ defined as a feasible team formation, $\RMP{\possibleGroups^0}$ is feasible. Yet $o^*(\possibleGroups^0)$ may not be a lower bound on $o^*$, nor will $\hat{o}^*(\possibleGroups^0)$ necessarily be an upper bound on $o^*$.  For the former, there may be additional constraints, related to $c \notin \possibleGroups^0$, that restrict $r$ to take a value higher than it does at the solution.  For the latter, there may be variables that needed to be added that are members of an improving solution.

Fix $\possibleGroups'$ and consider the linear relaxation of $\RMP{\possibleGroups'}$.  Let $\mu, \sigma_{i}$, and $\kappa_{i,j}$ be the dual multipliers of the first three listed constraints at the optimal value, defined on the appropriate indices.  Theorem~\ref{thm:BCP} establishes a condition for which one can assert that $\hat{o}^*(\possibleGroups')$ is an upper bound on $o^*$, enabling an exact branch-cut-and-price algorithm to be designed.

\begin{theorem}
\label{thm:BCP}
Let $\mathrm{rc}^*$ be the optimal value to the following problem:
\begin{align*}
\label{optmodel:pricing}
\tag{RC}
&
\text{max}
&&
\sum_{i \in \peopleSet} \sum_{j \in \peopleSet, j \neq i} \paren{\utility_{i,j} - \kappa_{i,j} } \chi_i \chi_j 
-
\sum_{i \in \peopleSet} \sigma_i \chi_i
-
\mu
\\
& \text{s.t.}
&&
\sum_{i=1}^\npeople \chi_i = \sizeofgroups 
\\
&&&
\chi_i \in \set{0,1}, 
&&
\forall i \in \set{1, \ldots, \npeople}.
\end{align*}
If $\mathrm{rc}^* \leq 0$ and the optimal value to~(\ref{optmodel:findViolatedCon}) is less than or equal to $\uplift^*(\possibleGroups')$, then $\hat{o}^*(\possibleGroups')$ is an upper bound of $o^*$.
\end{theorem}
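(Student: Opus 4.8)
The plan is to bound $o^*$ through the chain $o^* \le \hat{o}^* \le \hat{o}^*(\possibleGroups')$, where $\hat{o}^*$ denotes the optimal value of the linear relaxation of the \emph{unrestricted} master, i.e.\ formulation~(\ref{optmod:singleLevel-EXP}) with all columns $t_c$ and all \ref{exp-con} constraints present. The left inequality is the generic fact that the linear relaxation of a valid integer formulation upper-bounds its integer optimum; since~(\ref{optmod:singleLevel-EXP}) is a valid formulation of the TFP, its integer optimum is exactly $o^*$, giving $o^* \le \hat{o}^*$. All of the work is therefore in the right inequality $\hat{o}^*(\possibleGroups') \ge \hat{o}^*$: we must argue that deleting the columns and cuts outside $\possibleGroups'$ has not driven the relaxation value below that of the full problem.

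I would establish $\hat{o}^*(\possibleGroups') \ge \hat{o}^*$ by LP duality in the spirit of column generation. Take an optimal dual solution of the relaxation of $\RMP{\possibleGroups'}$, consisting of $\mu$, the $\sigma_i$, the $\kappa_{i,j}$, the multipliers of the $z$-normalization and $z$-value constraints, and multipliers $\rho_c \ge 0$ of the present cuts UP($c$), $c \in \possibleGroups'$. Extend it to the full dual by assigning $\rho_c = 0$ to every omitted cut UP($c$), $c \in \possibleGroups \setminus \possibleGroups'$, and leaving all other multipliers unchanged. The heart of the argument is that this extension is feasible for the full dual and has unchanged objective value $\hat{o}^*(\possibleGroups')$; weak duality then yields $\hat{o}^* \le \hat{o}^*(\possibleGroups')$. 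Feasibility requires two checks. First, the dual constraints attached to the pre-existing variables $r$, $w_{i,j}$, and $z_{i,v}$ each aggregate the $\rho_c$ only over coalitions containing the relevant indices; because every newly introduced $\rho_c$ is zero, these aggregates equal their restricted-dual values and the constraints still hold. Second, the only genuinely new dual constraints are those indexed by the omitted columns $t_c$; since $t_c$ appears in the assignment and linking constraints but in no cut UP($c'$), its dual constraint carries no $\rho$-term and reduces exactly to the requirement that the reduced cost of $t_c$ be nonpositive. Because~(\ref{optmodel:pricing}) maximizes this reduced cost over all $c$ (at $\chi = \mathbf{1}_c$ its objective equals $\groupUtility{c} - \sum_{i,j \in c}\kappa_{i,j} - \sum_{i \in c}\sigma_i - \mu$), the hypothesis $\mathrm{rc}^* \le 0$ certifies all of these new dual constraints at once.

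The role of the second hypothesis is to supply the matching primal inequality and make the bound exact. Consider the optimal primal of the relaxation of $\RMP{\possibleGroups'}$, extended to the full column set by $t_c = 0$ for $c \notin \possibleGroups'$. The assignment, linking, and $z$-constraints are preserved verbatim by appending zero columns, so the only full-problem constraints in question are the omitted cuts UP($c$); these hold at the current $(r,z)$ precisely when no omitted UP($c$) is violated, which by Proposition~\ref{prop:exp-con} is the condition that the optimum of~(\ref{optmodel:findViolatedCon}) does not exceed the current uplift value. Under this condition the extended primal point is feasible for the full relaxation with value $\hat{o}^*(\possibleGroups')$, giving $\hat{o}^* \ge \hat{o}^*(\possibleGroups')$; combined with the dual direction this yields the equality $\hat{o}^*(\possibleGroups') = \hat{o}^*$, so the restricted relaxation has in fact solved the full relaxation and $\hat{o}^*(\possibleGroups') \ge o^*$.

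I expect the main obstacle to be the reduced-cost bookkeeping in the dual step: one must confirm that~(\ref{optmodel:pricing}) reproduces the \emph{true} reduced cost of a column $t_c$ in the full relaxation. This means reconciling the objective coefficient $\objweight\,\groupUtility{c}$ of a column against the unweighted $\groupUtility{c}$ appearing in~(\ref{optmodel:pricing})—that is, checking that $\mu,\sigma_i,\kappa_{i,j}$ are taken in the efficiency units implicit in the normalization $u^*(\possibleGroups')$—and tracking the correct sign on the $\kappa_{i,j}$ contributions coming from the linking identities $w_{i,j} = \sum_{c \in \possibleGroups(i) \cap \possibleGroups(j)} t_c$; one must also verify that omitted columns enter no cut, so that no $\rho$-term is silently dropped from their reduced cost. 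Once this identity is pinned down, the duality extension itself is routine, and the two hypotheses separate cleanly into the dual-feasibility (pricing) and primal-feasibility (separation) halves of an optimal primal–dual pair for the full relaxation.
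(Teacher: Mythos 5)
Your proof is correct and follows essentially the same route as the paper's: the separation condition certifies that the omitted cuts UP($c$) are satisfied at the current restricted optimum, and the pricing condition certifies that every omitted column $t_{\tilde c}$ has nonpositive reduced cost $\groupUtility{\tilde c} - \mu - \sum_{i\in\tilde c}\sigma_i - \sum_{i\in\tilde c}\sum_{j\in\tilde c, j\neq i}\kappa_{i,j}$, so the restricted relaxation value equals that of the full relaxation and hence bounds $o^*$. The paper packages this as a two-step augmentation (first add the missing constraints, whose non-violation leaves the optimum unchanged, then observe the missing columns price out) rather than your explicit primal--dual certificate, and it silently elides the $\objweight$-scaling of the column objective coefficients that you rightly flag as bookkeeping to pin down.
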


\begin{proof}
We need only show that under the conditions of the theorem there is no set of groups $\tilde{\possibleGroups} \supset \possibleGroups'$ for which $\hat{o}(\tilde{\possibleGroups}) > \hat{o}(\possibleGroups')$.  
Let $P^*$ be the problem formed by adding to $\RMP{\possibleGroups'}$ any missing ~\ref{exp-con} constraints $\forall c \in \possibleGroups$ but keeping the index set $\possibleGroups'$ for $t_c$ columns.  Given this set of columns defined by $\possibleGroups'$, since the optimal value to $\RMP{\possibleGroups'}$ is less than or equal to $\uplift^*(\possibleGroups')$, we know that the optimal value of the LP relaxation of $\RMP{\possibleGroups'}$ is equivalent to the optimal value of the LP relaxation of $P^*$. We now show that, under the conditions of the theorem, for any coalition $\tilde{c} \notin \possibleGroups'$, the reduced cost of variable $t_{\tilde{c}}$, if it were to be added to the variables in $P^*$, is less than or equal to 0. This implies that the LP relaxation of $P^*$ is equal to $\hat{o}^*(\possibleGroups)$, which in turn implies $\hat{o}^*(\possibleGroups') = \hat{o}(\possibleGroups)$.

Fix $\tilde{c} \notin \possibleGroups'$. The reduced cost of $t_{\tilde{c}}$ is 
\[\groupUtility{\tilde{c}} - \mu - \sum_{i \in \tilde{c}} \sigma_i - \sum_{i \in \tilde{c}} \sum_{j \in \tilde{c}: j \neq i} \kappa_{i,j}
=
 \sum_{i \in \tilde{c}} \sum_{j \in \tilde{c}: j \neq i} \paren{ \utility_{i,j} - \kappa_{i,j} }
 -
 \sum_{i \in \tilde{c}} \sigma_i
 -
 \mu.
\]
By the conditions of the theorem, $\sum_{i \in \tilde{c}} \sum_{j \in \tilde{c}: j \neq i} \paren{ \utility_{i,j} - \kappa_{i,j} }
-
\sum_{i \in \tilde{c}} \sigma_i
-
\mu \leq 0$.  Since $\tilde{c}$ was arbitrarily chosen, this concludes the proof. $\Box$
\end{proof}
Note that~\ref{optmodel:pricing} is another instance of the CCBQP.

Equipped with Theorem~\ref{thm:BCP}, \textbf{BCP} proceeds as follows.   Starting from an appropriate $\possibleGroups^0$, we begin by iterating between finding improving columns, and finding violating constraints.  In particular, starting with $\possibleGroups' = \possibleGroups^0$~(\ref{optmodel:pricing}) (the \emph{pricing problem}) is iteratively solved, and whenever an improving column is found (i.e., the optimal value is greater than 0), it is added to $\possibleGroups'$.  When the optimal value drops to 0, model~(\ref{optmodel:findViolatedCon}) is solved.  If a violating constraint is found, the coalition corresponding to that constraint is added to $\possibleGroups'$, and, again, improving columns are identified.  If, however, there are no violating constraints, a global bound on the optimal value is found. One can then solve $\RMP{\possibleGroups'}$ \emph{with} integrality constraints to arrive at a feasible, potentially improving primal solution.  This is done at every search-tree node, as $\RMP{\possibleGroups'}$ can typically be solved effectively.

A branch-and-bound search is used to find the globally optimal solution.  In particular, after solving the root node of the search, by the procedure described above, a variable is chosen to \emph{branch} on. Two other nodes are created, one in which the chosen variable is forced to 0 and the other forced to 1.  The procedure continues until all nodes are pruned, whereupon the best found primal solution is the globally optimal solution to the original TFP instance. 
In the case where characteristic considerations are incorporated, one need only add constraints to the pricing problem limiting the choice of additional columns.  

More formally, a branch-and-bound search is implemented as follows.  Each search-tree node is specified by two coalitional sets:  $\possibleGroups^{\mathrm{in}}$ and $\possibleGroups^{\mathrm{ex}}$.  Coalitions  $c \in \possibleGroups^{\mathrm{in}}$ are required to be a part of the solution and $c\in \possibleGroups^{\mathrm{ex}}$ are forced to be excluded from the solution, and not be generated in subsequent pricing problems. The search-tree nodes are stored in a queue $\mathcal{L}$.  Initially, a root node $\rho$ contains $\possibleGroups^{\mathrm{in}} = \possibleGroups^{\mathrm{ex}} = \emptyset$, a relaxation value $o$ is assigned to $\rho$, and $\mathcal{L} := \set{\rho}$.  

Once processing at a search-tree node concludes, a new search node $\rho'$ is selected from $\mathcal{L}$.  The LP relaxation of~(\ref{optmod:singleLevel-EXP}) is found by solving $\RMP{\possibleGroups'}$ with added constraints enforcing each $c\in \possibleGroups^{\mathrm{in}}$ to be in the solution (i.e., $t_c = 1$) and that each $c\in \possibleGroups^{\mathrm{ex}}$ is excluded (i.e., $ t_c = 0$). This is solved by iteratively finding new columns to add to $\possibleGroups'$ via~(\ref{optmodel:pricing}) (adding constraints to exclude any $c \in \possibleGroups^{\mathrm{ex}} \cup \possibleGroups^{\mathrm{in}}$ by adding the constraint $\sum_{i \in c} \chi_i - \sum_{i \notin c} \chi_i \leq \sizeofgroups - 1 $), and then finding the most violated constraint by~(\ref{optmodel:findViolatedCon}).  By Theorem~\ref{thm:BCP}, if no improving column and no violated constraint is found, the LP relaxation is solved and a valid upper bound for the search node $o'$ is found.  

If $o'$ is less than or equal to the value of the best known feasible solution, the node is pruned and search continues.  Otherwise, the master problem $\RMP{\possibleGroups'}$ is solved with the integrality constraints included (along with constraint enforcing group include/exclusion in $\possibleGroups^{\mathrm{in}}$ and $\possibleGroups^{\mathrm{ex}}$).  If this solution is the best known, it is recorded.  

If $o'$ is still strictly larger than the value of the objective value of the best known solution, two descendant nodes are created by selecting a coalition $c' \in \possibleGroups'$ for which $0 < t_{c'} < 1$.  In one node $c'$ is added to $\possibleGroups^{\mathrm{in}}$ and in the other it is added to $\possibleGroups^{\mathrm{ex}}$. 

If $|\mathcal{L}| = 0$, the best known solution must be optimal.  

\subsection{Implementation details}
\label{sec:implementationDetails}

We implemented techniques \textbf{BC}, \textbf{EXP}, and \textbf{BCP} on an Intel(R) Core(TM) i7-4770 CPU @ 3.40GHz
and 8 GB RAM, written in C++ and compiled with GCC 4.8.4. IP models were solved with \texttt{Gurobi 7.5.1}. Unless otherwise noted, the experiments below all use an 1800 second time limit. All instances used in the experimental evaluation and MATLAB instance generators are available upon request. Further details are as follows.

\noindent \textbf{BC}: solved by \texttt{GUROBI} with cuts~(\ref{optmodel:findViolatedCon}) identified via a \texttt{callback}.   Through preliminary experimentation, the following settings for \texttt{GUROBI} were found most effective: \texttt{PreCrush} = 1; \texttt{DualReductions} = 0; \texttt{LazyConstraints} = 1. Cuts \ref{exp-con} are found by solving~(\ref{optmodel:findViolatedCon}) with the reformulation into model~(\ref{optmod:ccbqp2}).

\noindent \textbf{EXP}: solved by \texttt{GUROBI} with default setting via enumerating all of $\possibleGroups$.

\noindent \textbf{BCP}: solved by starting with an arbitrary solution (found by taking the first $\sizeofgroups$ and putting them in one group, then the next $\sizeofgroups$, etc.), and adding these groups to $\possibleGroups^0$.  We then select 100 other random groups to add to $\possibleGroups^0$ as follows.  For each of these teams, we first select a random person.  Then, having selected a group $g'$ of size $s' < \sizeofgroups$, each unselected person $j' \notin g'$ is assigned probability $p_{g'}(j') = \frac{\sum_{i \in g'} \utility_{i,j'}}{\sum_{i \in g'} \sum_{j \notin g'} \utility_{i,j}}$ and is chosen to be the next person added to $g'$ with probability $p(j')$.  This process continues until a group of size $\sizeofgroups$ is found and then that group is added to $\possibleGroups^0$.  This process is repeated 100 times.

In the branch-and-bound search, the following algorithmic specifications are set. Each pricing problem is solved by \texttt{GUROBI} with \texttt{Cuts} = 0 (this was found most effective in preliminary computational results) using reformulation~\ref{optmod:ccbqp2}.  The nodes $\mathcal{L}$ are stored in a priority queue and the node with the largest LP relaxation of the parent that created the node is selected. The group with the most fractional $t_{g'}$ is chosen to branch on.

\section{Experimental Evaluation of \textbf{EXP}, \textbf{BC}, and \textbf{BCP}}
\label{sec:experimentaleval}

Our results can be summarized as follows, with details provided in each corresponding subsection:
\begin{itemize}
\item \textbf{BCP} tends to dominate \textbf{EXP} and \textbf{BC} on the number of instances solved and optimality gap, though \textbf{BC} can at times provide an optimality bound when the others cannot. (\S \ref{sec:OptSpeedGap})
\item Under \textbf{BCP}, larger $\utility^{\mathrm{max}}$ values tend to result in more easily solved instances, despite a larger pseudo-polynomial formulation. (\S \ref{sec:umax})
\item The boundary values $\alpha \in \left\{ 0, 1\right\}$ result in inferior solutions and should be avoided in favor of, for example,  $\alpha \in \left\{ 0.01, 0.99\right\}$. A small consideration of efficiency finds more/better stable solutions in the same amount of time, or conversely, a small consideration of stability finds more stable solutions of equal efficiency with only a small amount of additional computation. (\S \ref{sec:boundary})
\item The type of characteristic constraints generated here have a signficant impact on stability but an insignificant effect on total utility. If such constraints are desired, Pareto improvments may often be available. (\S \ref{sec:CharCon})
\end{itemize}

Based on these results, the comparison of our own techniques to the benchmarks from the literature (in \S \ref{sec:BenchComp}) restricts attention to \textbf{BCP} with $\alpha \in \set{0.01, 0.5, 0.99}$. 

\subsection{Instance Generation} 
\label{sec:datagen}

To explore the multidimensional parameter space, we focused on markets of size $(\ngroups,\sizeofgroups) \in \left\{(2,4),(2,12),(3,8),(4,2),(4,4),(4,6),(4,8),(6,4),(8,4) \right\}$, including therefore instances with $\npeople \in \left\{ 8,16,24,32\right\}$. For each $(\ngroups,\sizeofgroups)$-pair we generate 20 instances (five for each of the four preference generation models described below) using $\utility^{\mathrm{max}}=25$. Further, for $(\ngroups,\sizeofgroups)=(6,4)$ we also generated twenty instances each for $\utility^{\mathrm{max}} \in \left\{ 5, 100\right\}$, for use in \S \ref{sec:umax}, a study of the effect of the $\utility^{\mathrm{max}}$ parameter. Finally for each these eleven market parameter settings (nine with $\utility^{\mathrm{max}}=25$ and one each with $\utility^{\mathrm{max}}=5,100$) we also generate a parallel set of instances with three personal characteristics (i.e., with $|\charSet|=3$). This doubles the number of instances, resulting in 440 simulated markets, 220 each for $|\charSet| \in \set{0,3}$.

To ensure feasibility when supplementing an instance with characteristic constraints, $\possesChar{i}{q}$ values are determined by solving the following optimization problem:
\begin{align*}
\label{FBmodel}
\tag{FB}
&\text{max}   
& & \sum_{i \in \peopleSet} \sum_{q \in \charSet} w_{i,q} \cdot \delta_{i,q} \\
& \text{s.t.} 
&& \minChar{q} \leq \sum_{i \in \peopleSet \vert \mathsf{t}(i)=k}\possesChar{i}{q} 
\leq \maxChar{q} \qquad \forall q \in \charSet, \forall k \in \groupSet \\
&&& \possesChar{i}{q} \in \set{0,1}
&
\end{align*}
For any values of $w_{i,q}$, this model produces $\possesChar{i}{q}$ values so that at least one valid team exists. For each instance, values $w_{i,q} \sim U[-1,1]$ are drawn independently $\forall i \in \peopleSet$ and $\forall q \in \charSet$. This provides a random assignment of all three personal binary charecteristics such that quotas and caps as defined in \S \ref{sec:charCons} are always feasible.

Proceeding, each market is defined by $\utility_{i,j}$ values, which can be generated randomly according to a preference generation model. Here we employ two models from the literature (\textbf{G1} and \textbf{G3}) and offer two new variants of our own (\textbf{G2} and \textbf{G4}).

\noindent \textbf{Monotone Common-Value (G1)}:  $\utility_{i,j} = j + \epsilon_{i,j}$, where $\epsilon_{i,j} \sim N\paren{0,\frac{\npeople}{5}}$. (Note that $\epsilon_{i,j}$ is re-sampled until $\utility_{i,j}$ is a positive number.)  This recreates the generation procedure of \cite{othman2010finding}, with a common-value component $j$ (equal to the agent index) and a normally distributed private deviation from the common baseline opinion of an agent's value. (This may simulate, for example, a sports draft where the relative order of players does not vary much from one selecting team to the next, resulting in highly correlated preferences.)

\noindent\textbf{Clustered Common-Value (G2)}: $\utility_{i,j} = l_{j} + \epsilon_{i,j}$, where the common value $l_{j}$ has four levels, $l_j \in \set {0, n/4,n/2, 3n/4}$ for four equal segments of the market. The resulting preferences are similar to \textbf{G1}, but with a heavier reliance on the private values to distinguish individuals. This models a market with less extreme public agreement on the value of agents than \textbf{G1}.

\noindent\textbf{Uniform independent preferences (G3)}:  $\utility_{i,j} \sim U\set{0,100}$, drawn independently from a discrete uniform distribution. For each agent, $\npeople-1$ numbers are chosen and sorted. Then, the differences between consecutive draws, in sorted order, are used to describe the utility of person $i$ for other agents in the market \citep{DBLP:journals/corr/WrightV15}.

\noindent\textbf{Affinity Preferences (G4)}: We designed this set of instances to model settings in which the characteristics of agents affect their utilities.
First, characteristic sets are randomly generated based on model~(\ref{FBmodel}).
We assume that the first two characteristics represent traits for which \emph{like} agents tend to have a natural affinity (e.g., age or gender) while the third characteristic is viewed favorably by all (e.g., higher skill level).
We therefore generate $\utility_{i,j}= \beta_{1,i}\gamma_{i,j,1}+\beta_{2,i}\gamma_{i,j,2}+\beta_{3,i}\delta_{j,3}+\epsilon_{i,j}$, where binary $\gamma_{i,j,q}=1$ and if and only if $\delta_{i,q}=\delta_{j,q}$  for $q \in \set{1,2}$. Further, independently drawn personal parameters $\beta_{1,i}, \beta_{2,i} \sim U[-1,2],~\beta_{3,i} \sim U[0,2]$, and $\epsilon_{i,j} \sim N(0,1)$ reflecting that affinity effects are twice as likely to attract likes values as repel them (and to varying degrees), and that characteristic 3 is always viewed favorably but to varying dergrees.

In all four data-generation schemes, an $\npeople \times \npeople$ utility matrix $U$ is generated as above and then \emph{ipsative} scores are generated by normalizing with respect to the mean and standard deviation of the utility vector of agent $i$, then rescaled and rounded to make utilities positive, integer, and no more than $\utility^{\mathrm{max}}$ \citep{baron1996strengths}.

\subsection{Optimality, Gap, and Computation Time Analysis}
\label{sec:OptSpeedGap}
We compare the relative efficiency of \textbf{EXP}, \textbf{BC}, and \textbf{BCP}, running each of the 220 unconstrained (i.e., $|\charSet| = 0$) market instances under $\objweight \in \set{0, 0.01, 0.5, 0.99, 1}$, resulting in 1100 runs of each algorithm. Detailed solution statistics appear in the Appendix, while Figure~\ref{fig:PP} depicts a cumulative distribution plot of performance.
\begin{figure}
	\centering
	\includegraphics[scale=0.4]{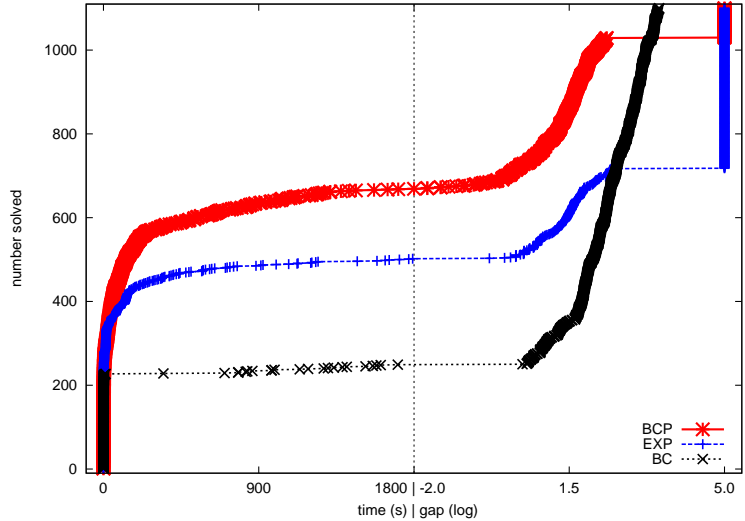}
	\caption{Cumulative distribution plot of performance, comparing the three algorithms developed in this paper.}
	\label{fig:PP}
\end{figure}
For each algorithm, the left half provides a plot with height equal to the cumulative number of instances solved at the time given on the horizontal axis. In the right half, the height of the plot corresponds to the number of remaining instances (unsolved at 1800s) with at most the log absolute gap (i.e., $\log (\textnormal{UB} - \textnormal{LB})$) shown on the horizontal axis. As a convention, we show an absolute gap of 100,000 (log of which is 5) for those instances without a definite absolute gap (i.e., for \textbf{EXP} if memory limit is hit and for \textbf{BCP} if no upper bound is proven, meaning the root node is unresolved).

Figure~\ref{fig:PP} provides clear evidence of the superiority of \textbf{BCP} over \textbf{EXP}. \textbf{BC} is the most robust, in that it can provide a gap for all instances tested, being an entirely memory controlled procedure. However, both \textbf{BCP} and \textbf{EXP} solve many more instances and leave a much smaller relative gap.

In summary, aggregated over these 1100 runs, \textbf{BCP} identifies a strictly better solution (lower bound) than the other two algorithms in 420 instances, and proves a strictly tighter relaxation bound (upper bound) in 344 instances.  In comparison, \textbf{BC} / \textbf{EXP} provide strictly best lower bound and upper bound, respectively, in 36 / 57 and 71 / 43 instances.  For those instances solved to optimality by both \textbf{BC} and \textbf{EXP}, the solution times for \textbf{EXP} are far superior and so we use \textbf{EXP} for the remaining comparisons to \textbf{BCP}.

\begin{figure}[ht] 
	\label{ fig7} 
	\begin{minipage}[b]{0.5\linewidth}
		\centering
		\includegraphics[width=1\linewidth]{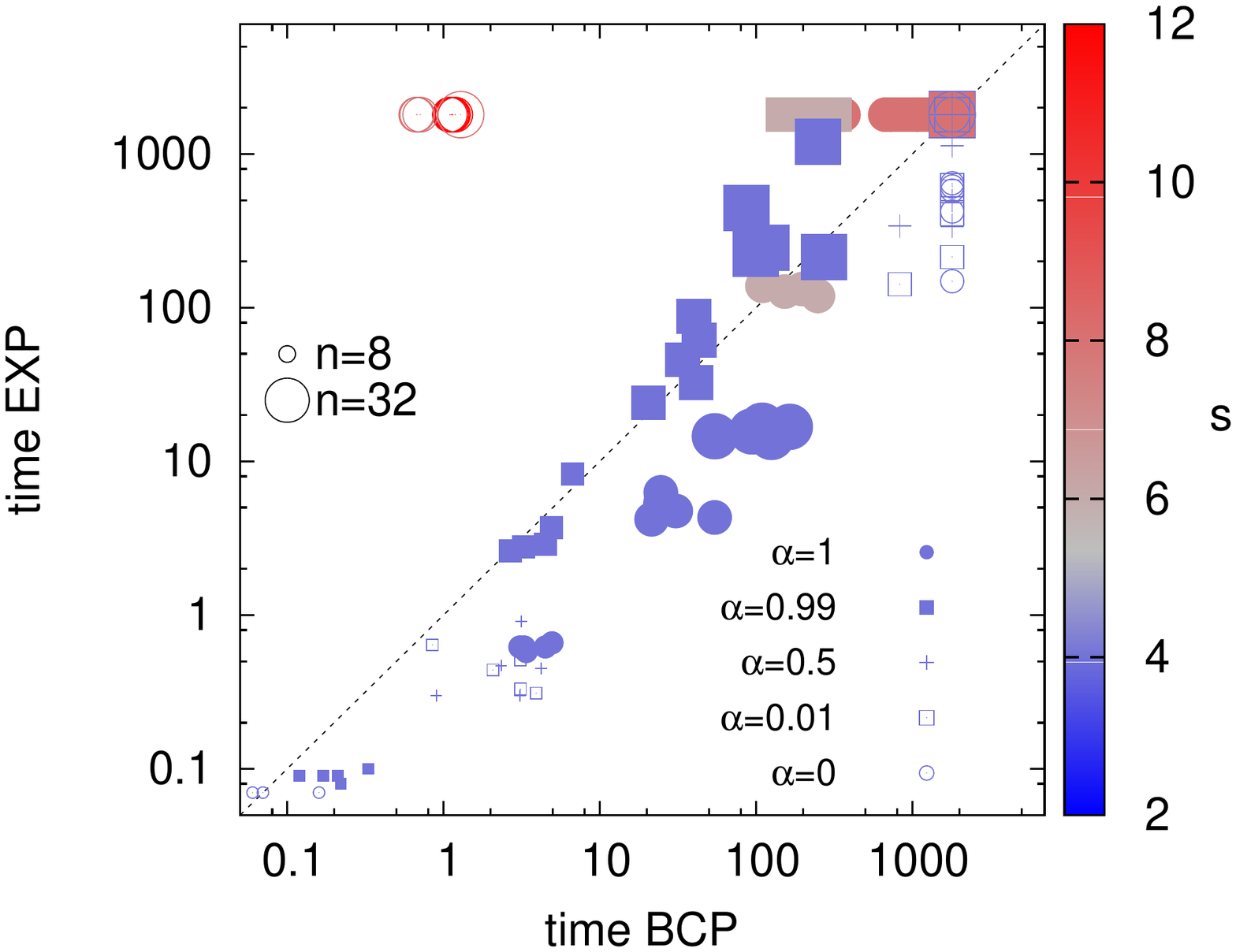} 
		\caption{Runtime for \textbf{G1}} 
		\label{fig:scatG1}
		\vspace{4ex}
	\end{minipage}
	\begin{minipage}[b]{0.5\linewidth}
		\centering
		\includegraphics[width=1\linewidth]{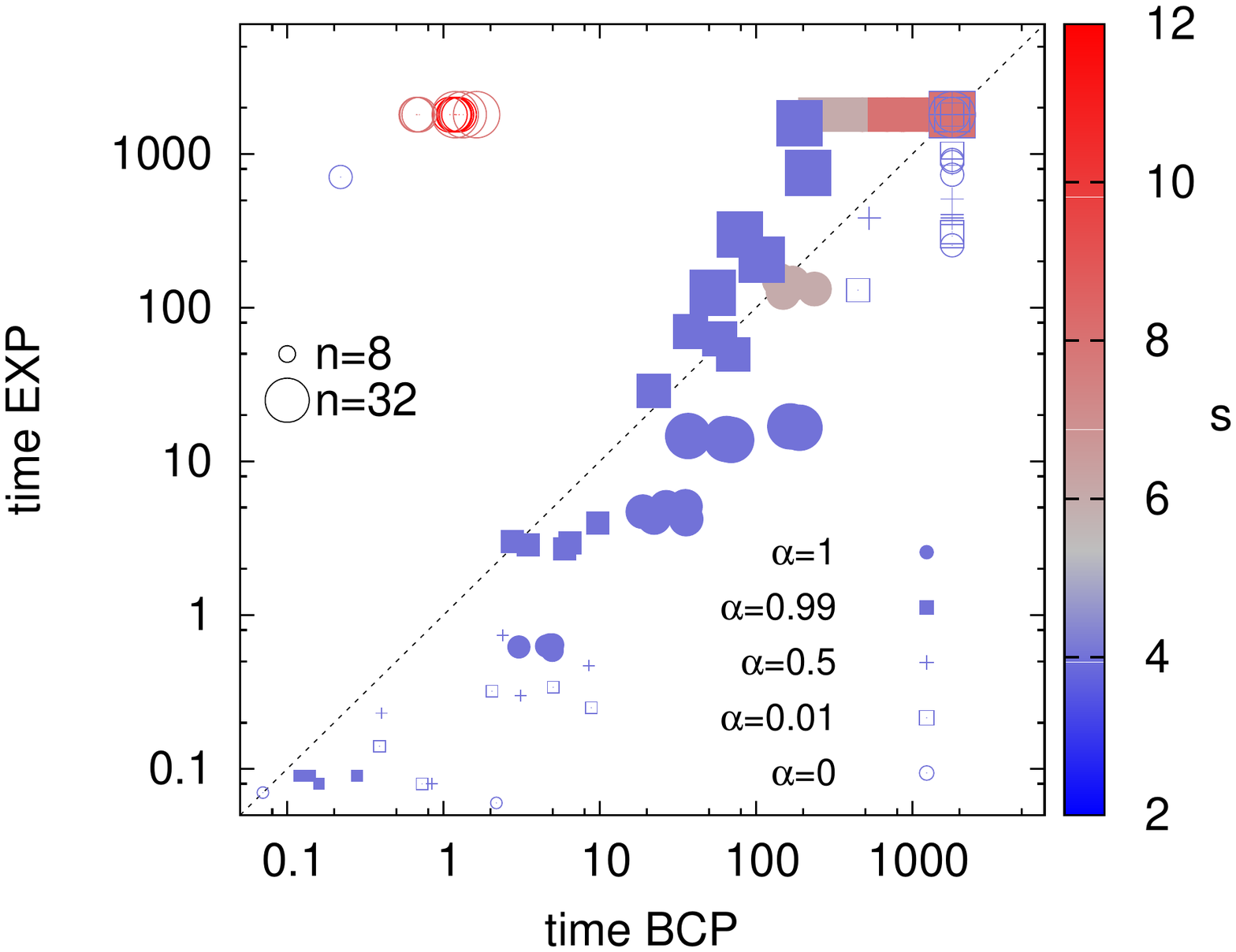} 
		\caption{Runtime for \textbf{G2}} 
		\label{fig:scatG2}
		\vspace{4ex}
	\end{minipage} 
	\begin{minipage}[b]{0.5\linewidth}
		\centering
		\includegraphics[width=1\linewidth]{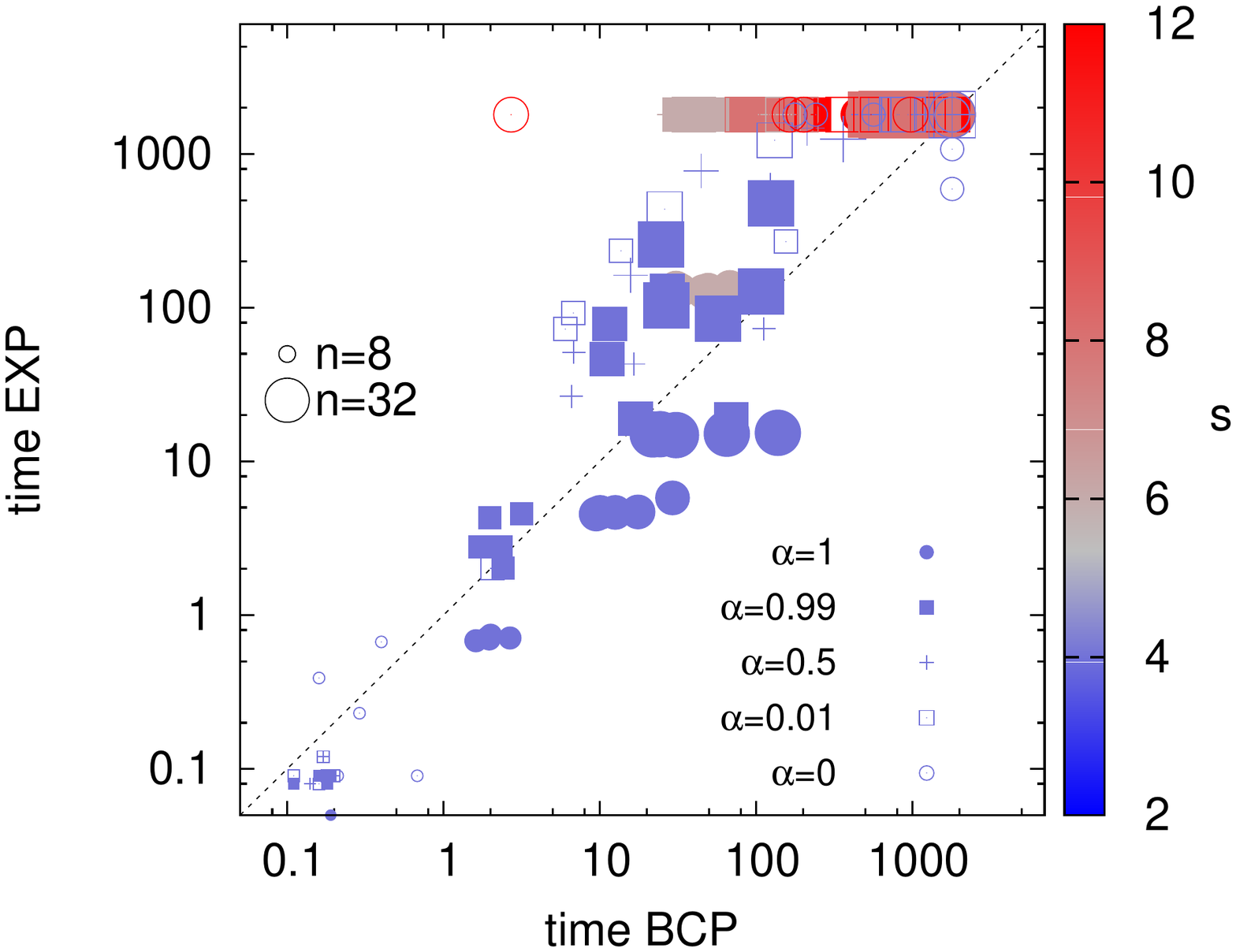} 
		\caption{Runtime for \textbf{G3}} 
		\label{fig:scatG3}
		\vspace{4ex}
	\end{minipage}
	\begin{minipage}[b]{0.5\linewidth}
		\centering
		\includegraphics[width=1\linewidth]{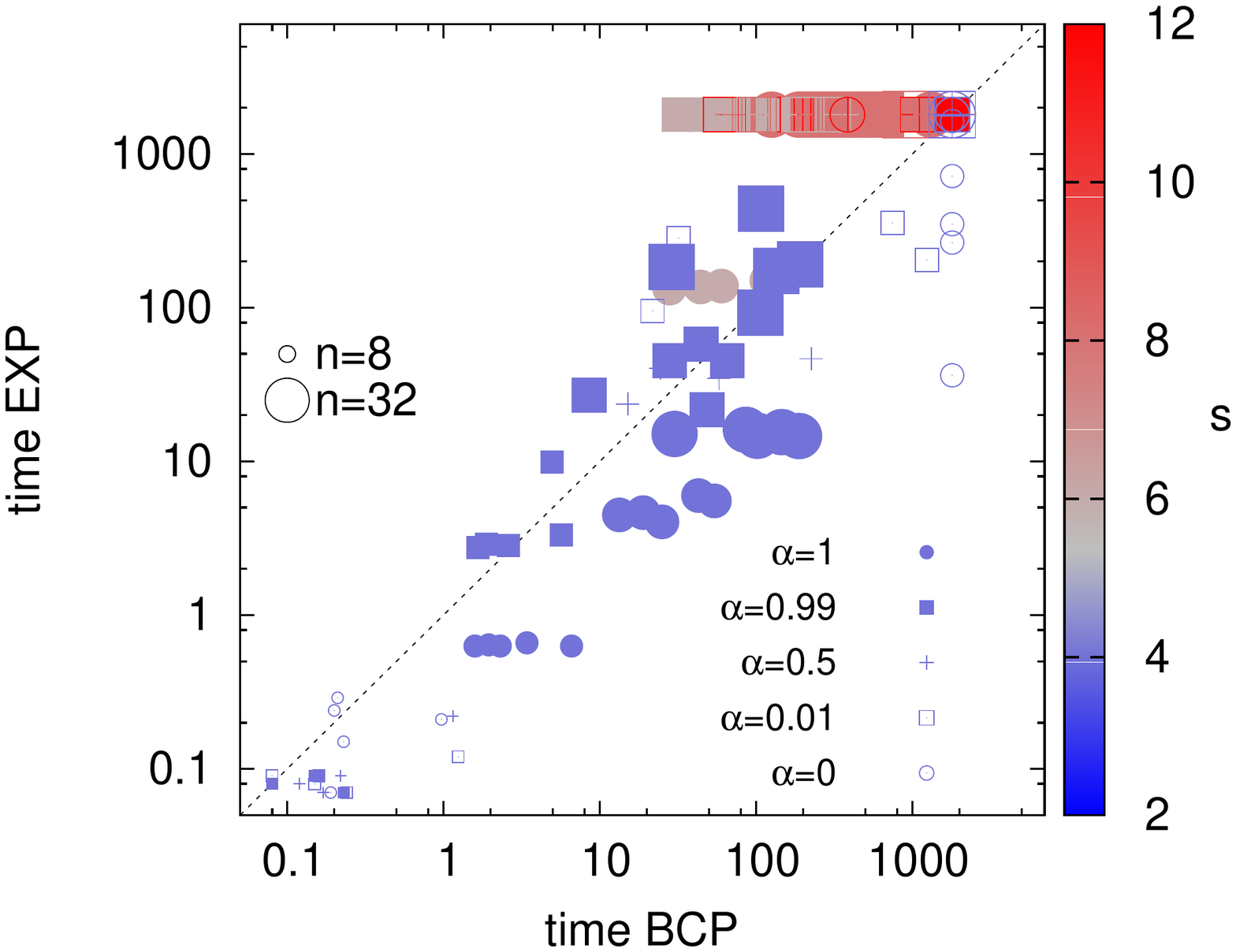} 
		\caption{Runtime for \textbf{G4}} 
		\label{fig:scatG4}
		\vspace{4ex}
	\end{minipage} 
\end{figure}

Figures~\ref{fig:scatG1} through~\ref{fig:scatG4} provide more detailed comparison of \textbf{BCP} and \textbf{EXP} through scatter plots, one for each of the four generation schemes.  In particular, for each generation scheme, a scatter plot consisting of a point per run (275 runs) with coordinates given by solution time of \textbf{BCP} and \textbf{EXP} is depicted.  The size of each point corresponds to the number of agents in the instance.  The color (gradient, from red to blue) corresponds to the team size, $\sizeofgroups$.  The point style corresponds to the value of $\alpha$, with axes depicted in logscale. 

These figures show that for the instances mutually solved, the solutions times are in general comparable, but that there are many instances unsolved by \textbf{EXP} that are solved by \textbf{BCP}, for all generations schemes.  This is more apparent for \textbf{G3} and \textbf{G4} than for \textbf{G1} and \textbf{G2}, where \textbf{BCP} is able to solve even more instances.   Table~\ref{tab:solvedInst} reports the number of instances solved by \textbf{BCP} and \textbf{EXP}, respectively.  This table suggests that \textbf{EXP} is only slightly affected by the generation scheme, but that \textbf{BCP} is able to solve significantly more instances for \textbf{G3} and \textbf{G4}, which suggests that varying degrees of common-value dependency makes instances more challenging.

\begin{table}[t!]
	\centering
	\footnotesize
	\caption{Number of instances solved in 1800 seconds by \textbf{BCP} and \textbf{EXP}}
	\label{tab:solvedInst}
	\begin{tabular}{C{15ex}|C{15ex}|C{15ex}|C{15ex}|C{15ex}}
		& \textbf{G1}  & \textbf{G2} & \textbf{G3}  & \textbf{G4}  \\ \hline
		\textbf{BCP} & 131           & 133           & 157           & 145           \\
		\textbf{EXP} & 89            & 89            & 94            & 91           
	\end{tabular}
\end{table}

The scatter plots also clearly exhibit that \textbf{EXP} is only superior to \textbf{BCP} when the number of agents and the size of the groups are relatively small. This coincides with expectations, since \textbf{EXP} is a precise model of the problem\textemdash however, as the problem size grows on any dimension, the application of \textbf{EXP} becomes prohibitive due to either memory restrictions or, even when memory limits are sufficient, resolution difficulty. 

Based on the analysis in this subsection, we use the solutions obtained by \textbf{BCP} for the subsequent analysis. Note that even if an upper bound is not proven by \textbf{BCP}, a lower bound (i.e., high-quality feasible solution) can still be obtained by solving for the best solution using the columns generated.  This contrasts with \textbf{EXP}, where, if the memory limit hits, no feasible solution will be available.  

\subsection{The effect of $\utility^{\mathrm{max}}$}
\label{sec:umax}


We next investigate the sensitivity of \textbf{BCP} to $\utility^{\mathrm{max}}$.  Figure~\ref{fig:PPMaxUtil} provides a cumulative distribution plot of performance for $\utility^{\mathrm{max}} \in \left\{ 5, 25, 100 \right\}$ on all instances generated with $m = 6$ and $s = 4$. (Four preference models, each with five instances and five values of $\alpha$, result in 100 runs for each of three $\utility^{\mathrm{max}}$ values.)  

\begin{figure}
	\centering
	\includegraphics[scale=0.4]{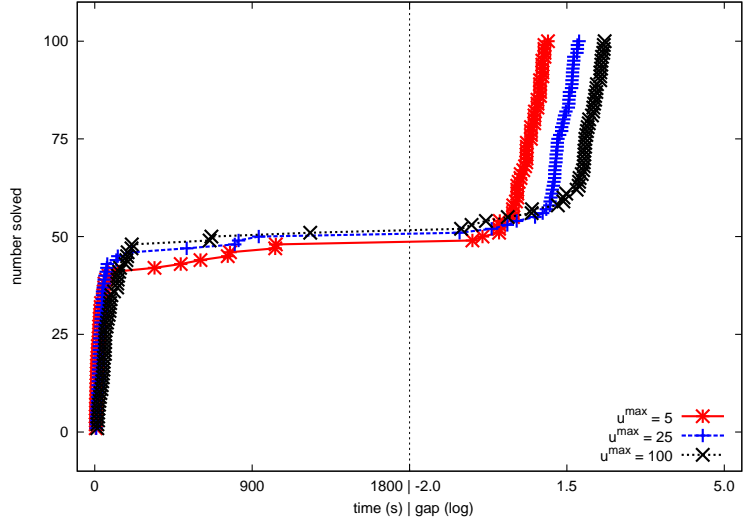}
	\caption{Cumulative distribution plot of performance, comparing \textbf{BCP} for varying $\utility^{\mathrm{max}}$ with $m = 6$ and $s = 4$.}
	\label{fig:PPMaxUtil}
\end{figure}

The figure exhibits an interesting relationship.  Parameter $\utility^{\mathrm{max}}$ sets the number of possible values that an agent can specify for the other individuals in the market.  Given more preference resolution/detail, it is slightly easier to find the optimal solutions. Additionally, the absolute gap increases for those instances that are unsolved.  This can be attributed to either the model becoming larger (as the size is directly related to $\utility^{\mathrm{max}}$) or that the relative differences from solution to solution is smaller when $\utility^{\mathrm{max}}$ is small.  The relative performance differences, however, are marginal, showing that even though model~(\ref{optmod:singleLevel-EXP}) is pseudo-polynomial in $\utility^{\mathrm{max}}$, \textbf{BCP} is able to scale for large preference ranges.

\subsection{Boundary effects on $\alpha$}
\label{sec:boundary}

When considering the single-objective variants of the TFP, when only total intra-group utility or stability, respectively, are of interest, a slight emphasis on the other objective leads to significant improvements. For $\alpha = 1$, maximizing utility only, a perturbation to $\alpha = 0.99$ results in slightly longer solution times but provides solutions with the same total utility but slightly better stability.   When $\alpha = 0$, and one is concerned only with stability, perturbing to $\alpha = 0.01$ results in some solutions with even \emph{better} stability within time limits, due to computational efficiency gains. 
 
Focusing first on the $\alpha = 1$ case, we restrict attention to the 197 instances (out of 220) solved to optimality by \textbf{BCP} with $\alpha = 1$ and $\alpha = 0.99$.  For every such instance, the total utility at the optimal solution is the same, but the maximum uplift can be significantly different. In 26 of the instances (13.2\%) a better solution with respect to maximum uplift is identified, with reduction in maximum uplift ranging from 3.84\% to 100\%, and an average of 35.2\%. For three instances in particular, the reported optimal solution left a maximum uplift of 8, 11, and 13, respectively under $\alpha = 1$, but with $\alpha = 0.99$, the reported optimal solution has the same total utility, but found an entirely stable solution (i.e., maximum uplift = 0, a reduction of 100\%).  This does come at a slight computational cost; the average solution time on these instances is 118 ($\alpha = 1$) and 147 ($\alpha = 0.99$) seconds, respectively.  This added computational effort, however, is rewarded with solutions of strictly better quality. This suggests, for example, that recent research using BCPP (i.e., $\alpha = 1$) in sports management \citep[e.g.,][]{Recalde2016} may find Pareto improved solutions using $\alpha = 0.99$.

The difference in algorithmic performance is even more apparent when comparing the solution quality and solution time under $\alpha = 0$ versus $\alpha = 0.01$.  First, only 73 instances are solved to optimality with $\alpha = 0$ versus 91 with $\alpha = 0.01$. More critically, despite having a theoretically worse optimal solution with respect to maximum uplift, in 131 of the 220 instances, \textbf{BCP} identifies a solution with smaller uplift when $\alpha = 0.01$ versus setting $\alpha = 0$, where the opposite occurs in only 3 instances. Furthermore, this reduction is often substantial.  The average percent reduction in maximum uplift in the 131 instances where \textbf{BCP} found a solution with smaller maximum uplift is 74.5\%.   In 59 of the 131 instances, the reduction is 100\%, meaning that a completely stable solution is found, whereas there are groups with positive uplift in the solution found with $\alpha = 0$ at 1800 seconds.

The reason for this surprising relative gain in solution quality can be attributed to the fact that solutions with high total intra-group utility will have relatively low instability, compared with randomly chosen groups.  Without any consideration of total team utility in the objective function, it is challenging for the solver to distinguish between partitions of agents.  Having a slight emphasis on total intra-group utility focuses the search for solutions, which turns out to be particularly effective under a column generation approach. 
\textbf{BCP}'s performance is only slightly hindered by the inclusion of characteristic constraints; \textbf{BCP} is able to solve 668 and 637 instances with and without characteristic constraints, respectively.  Of the 617 instances that mutually solved with and without the constraints, the average solution time is 141.3 and 142.2 seconds, respectively, an insignificant difference. 

\subsection{The effect of characteristic constraints}
\label{sec:CharCon}

\begin{figure}[t!]
	\centering
	\includegraphics[scale=0.8]{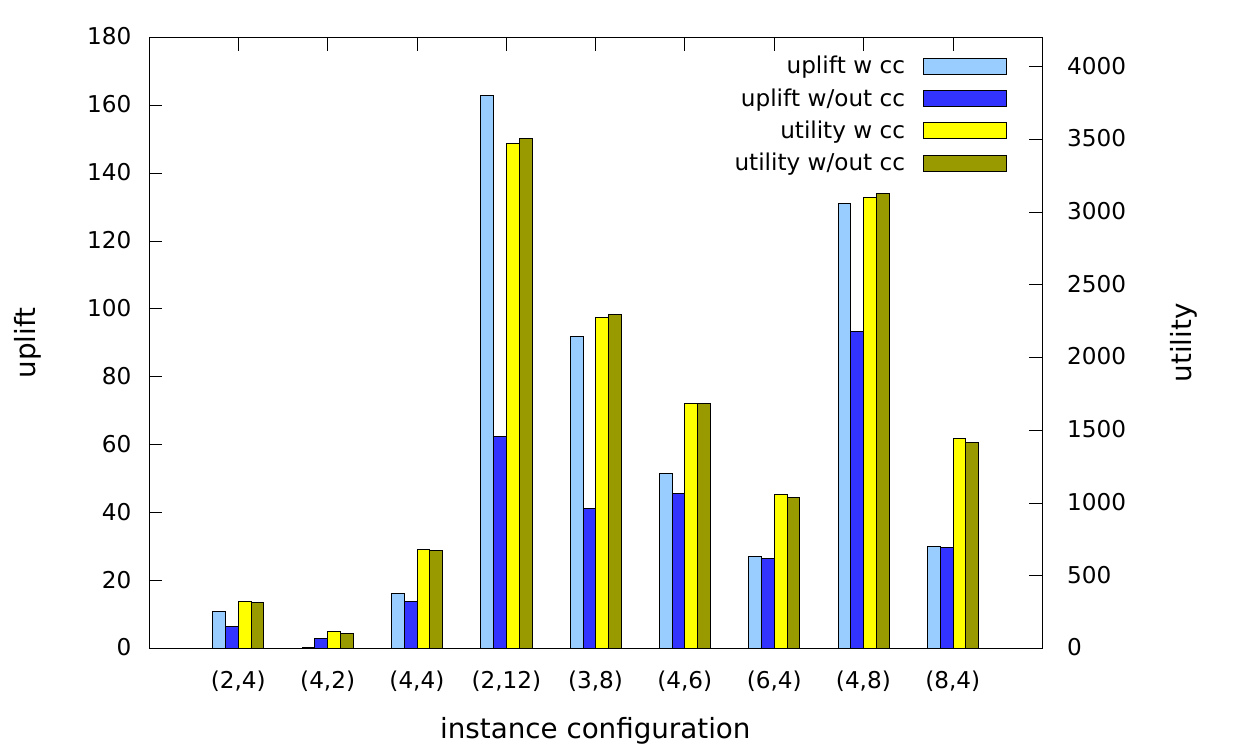}
	\caption{Average maximum uplift and average total market utility, with and without characteristic constraints.}
	\label{fig:CharPlotBar}
\end{figure}

Figure~\ref{fig:CharPlotBar} provides a visual summary depicting the average of the maximum uplift and total utility, respectively, in the best solutions obtained by \textbf{BCP} with and without characteristic constraints. Clearly uplift is much more sensitive to adding characteristic constraints than utility. Indeed, the change in average utility is barely noticeable when constraints are added in any case, compared with the maximum uplift, which rises drastically in several market sizes, even more than doubling for some configurations. (Table~\ref{tab:CharCons} in the Appendix provides further detail.) Market designers should take heed; the addition of characteristic constraints may seem innocuous if only total utility is considered, but actually it has the potential to introduce a great deal of instability. 


%

\section{Comparison of \textbf{BCP} to \textbf{DRAFT} and \textbf{OPOP} }
\label{sec:BenchComp}

Here we compare \textbf{BCP} (with different $\alpha$) to \textbf{DRAFT} and \textbf{OPOP} on three performance metrics: \emph{efficiency} (measured by average individual utility), \emph{inequity} (measured by the range of individual and team utility), and \emph{instability} (measured by individual and coalitional uplift). \textbf{DRAFT} and \textbf{OPOP} (see \S \ref{sec:litbench}) were not designed to handle characteristic constraints and may result in infeasiblity if applied na\"ively. (A team may for example over-consume a constrained characteristic, resulting in infeasbility for another team, even if they conscientiously pick to maintain their own feasibility throughout. Stated differently, given the NP-hardness of the problem, locally greedy algorithms will fail to maintain global feasibility.) The results of this section are therefore limited to an investigation of the $|\charSet| = 0$ subset of instances. 

%

\subsection{Efficiency}
\label{subsec:Eff}
A clustered bar chart provides efficiency comparisons of \textbf{BCP}, \textbf{DRAFT}, and \textbf{OPOP} in Figure~\ref{fig:efficiency}.
For each agent in each outcome, we divide agent $i$'s resulting utility by $i$'s maximum realizable utility $\individualMaxUtility{i}$. For each algorithm, Figure~\ref{fig:efficiency} reports the average of this percentage utility over all solutions for each generation scheme. 


Our results show that \textbf{BCP} solutions have superior average utility across all sets of instances. This superiority is more obvious for data generation schemes with independent (\textbf{G3}) and affinity preferences (\textbf{G4}) and larger $\objweight$. (The latter is to be expected as the weight of efficiency in the objective increases.) The significant common-value components of \textbf{G1} and \textbf{G2} result in a larger degree of ``necessary disappointment'' in which it is impossible for every agent to achieve her first best possible team. \textbf{BCP} tends to get closer to maximum satisfaction with the outcome of the mechanism. Perhaps surprisingly, even when the focus on efficiency is low (i.e., when $\alpha= 0.01$) \textbf{BCP} still provides more efficient solutions than the heuristic formats \textbf{DRAFT} and \textbf{OPOP}.

\begin{figure}[t!]
	\centering
	\includegraphics[scale=0.8]{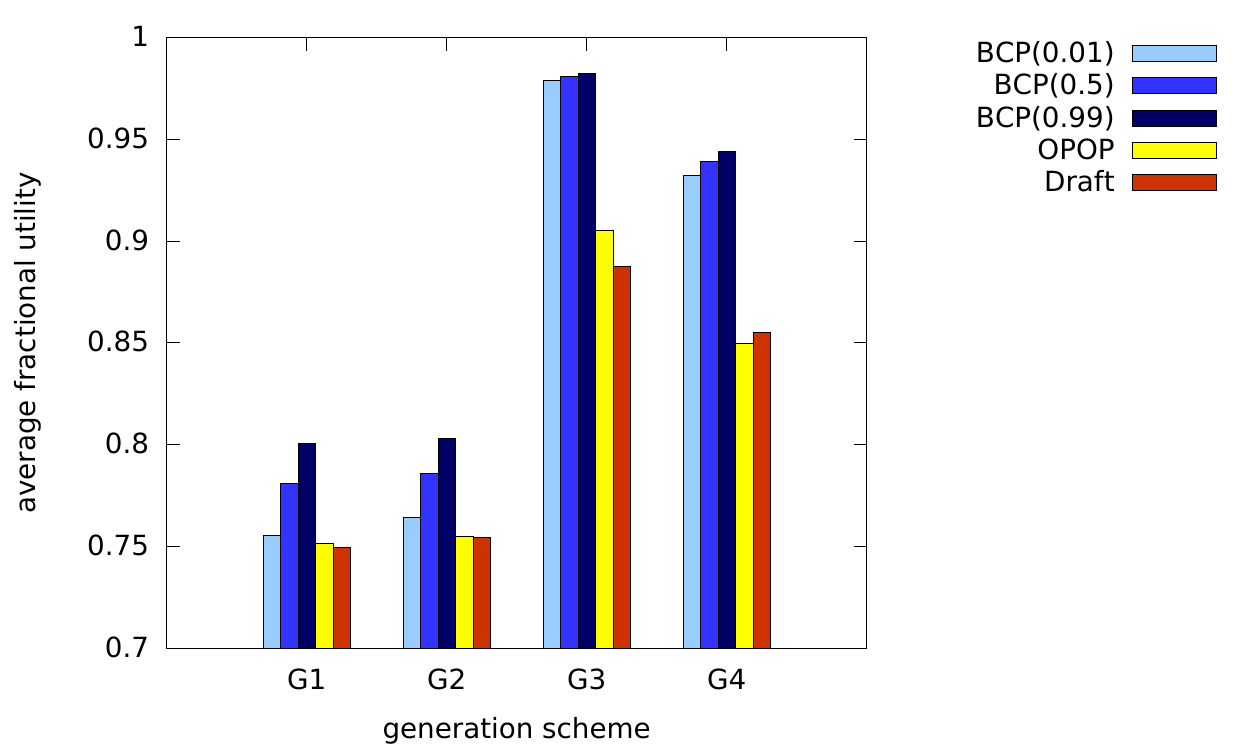}
	\caption{Average solution utility as a fraction of maximum realizable utility.}
	\label{fig:efficiency}
\end{figure}

\subsection{Inequity}
\label{fair}
Intuitively, having some agents with very high utility while other agents have very low utility can be problematic and viewed as inequitable or unfair. This can be measured and compared on the individual level and on the team level. For percentage-based comparisons, we define the \emph{individual inequity} as the best individual utility minus the worst individual utility in a solution, divided by the average of $\individualMaxUtility{i}$. Similarly, we define the \emph{team inequity} as the best team utility minus the worst team utility in a solution divided by the average \emph{ideal-team utility}, defined for each $i$ as the maximum utility of any coalition including $i$.  In Figure~\ref{fig:fairness} we report the average individual inequity and team inequity over all instances in each generation scheme for each of the five algorithms.  A taller bar indicates a worse solution on the measure of inequity. 


Clearly, the preference generation format can greatly impact the ranking of mechanisms on this measure of allocative inequity. The left chart in Figure~\ref{fig:fairness} indicates that for data generation schemes \textbf{G3} and \textbf{G4}, the individual inequity is typically smaller for \textbf{BCP} (regardless of $\objweight$), and thus its results may be perceived as fairer. However, for data generation schemes with stronger common-value effects (\textbf{G1} and \textbf{G2}), only \textbf{BCP} with $\objweight=0.99$ results in more equitable solutions than those found by \textbf{OPOP} and \textbf{DRAFT}.
\begin{figure}[t!] 
	\begin{minipage}[b]{0.5\linewidth}
		\centering
		\includegraphics[width=1\linewidth]{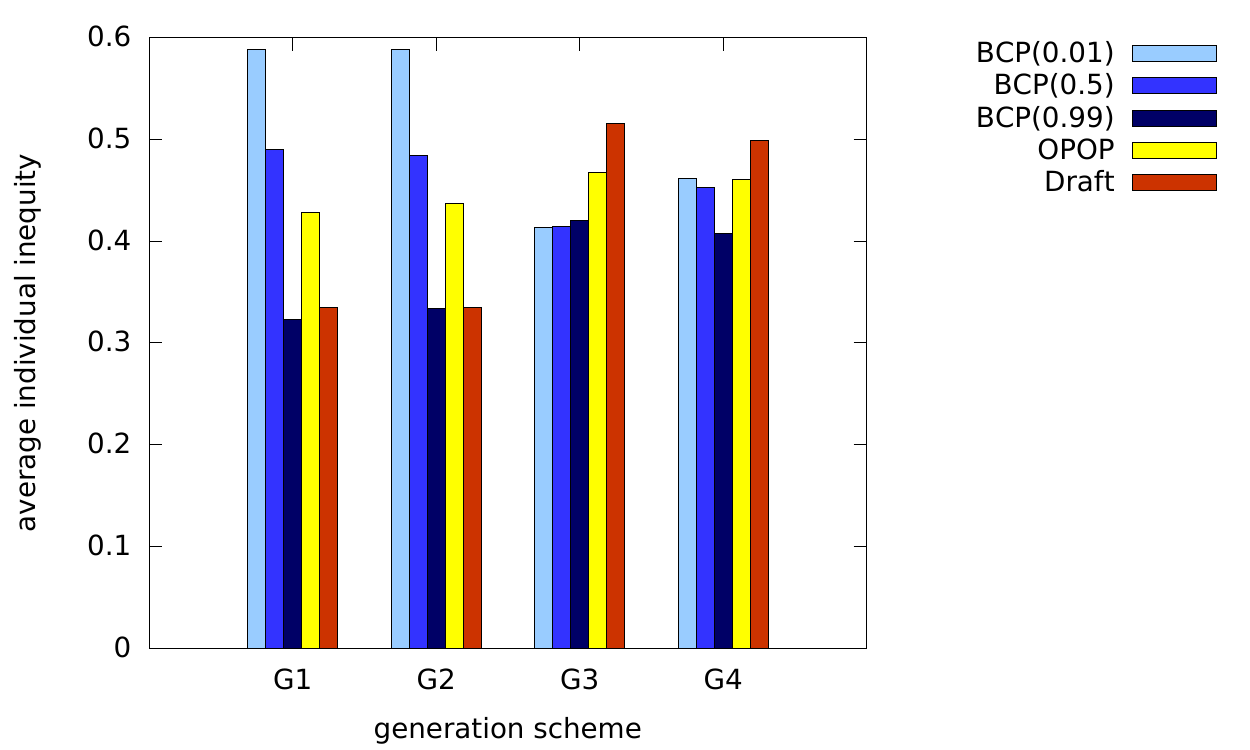} 
	\end{minipage}
	\begin{minipage}[b]{0.5\linewidth}
		\centering
		\includegraphics[width=1\linewidth]{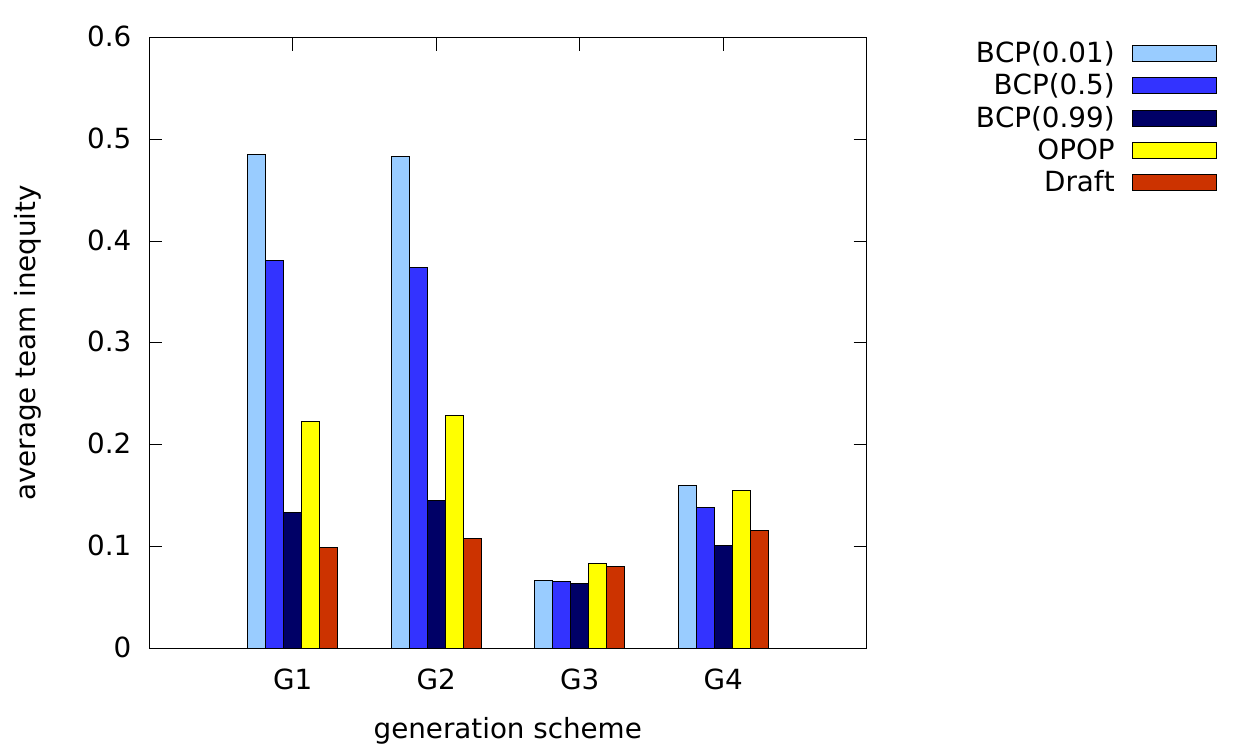} 
	\end{minipage}
	\caption{Average inequity (percentage utility range), averaged over individuals (left) and teams (right).} 
	\label{fig:fairness} 	
\end{figure}

Indeed this relationship is among our more interesting findings; the focus on stability (inherent as $\objweight$ decreases) results in less equitable solutions when there is wide agreement on the ``top of the market'' as in \textbf{G1} and \textbf{G2}. Stated differently, in order to reduce the tendency of groups to want to break away and form their own teams (when the focus is stability), one will have to generate some teams with more ``top'' agents together, resulting in a wider satisfaction gap (inequity) in the market. This can only occur when the underlying preferences reflect a certain degree of \emph{agreement} on which agents comprise the top of the market (e.g., in \textbf{G1} and \textbf{G2}). When agents tend to prefer to gather together based on their own affinity for those of similar characteristics (\textbf{G4}), this effect is reduced, and it nearly disappears when preferences are independent (\textbf{G3}). 

This effect is even more pronounced in the right half of Figure~\ref{fig:fairness}. Our results show that for data generation schemes with independent and affinity preferences (\textbf{G3} and \textbf{G4}), \textbf{BCP} with $\objweight=0.99$ achieves more equitable teams than other algorithms. However, for data generation schemes \textbf{G1} and \textbf{G2}, \textbf{DRAFT} achieves the the most equitable solutions across teams. Note that \textbf{BCP} is not directly minimizing inequity, and in particular when we focus only on minimizing the maximum uplift ($\objweight = 0.01$), the team inequity is more obvious. \textbf{DRAFT} tends toward teams of equal satisfaction, spreading out ``top picks'' as would be expected; \textbf{BCP} with $\objweight = 0.01$ tends to allow ``top picks'' to clump together in order to reduce their desire to leave the market.

\subsection{Instability}

To compare team formation outcomes across instances we again need a metric scaled to 1. Let \emph{individual instability} be defined as the maximum uplift an agent can gain by defecting to a hypothetical uplift team, divided by her utility within her current team. \emph{Team instability} is the maximum uplift of a team containing each $i$ divided by the utility of her current team.  These provide a relative measure of how much an individual or a team is incentivized to defect given the current team assignment. Figure~\ref{fig:stab1} provides two plots depicting the average individual (left) and average coalition uplift (right), averaged over all $i$.  
 
 \begin{figure}[t!] 
	\begin{minipage}[b]{0.5\linewidth}
		\centering
		\includegraphics[width=1\linewidth]{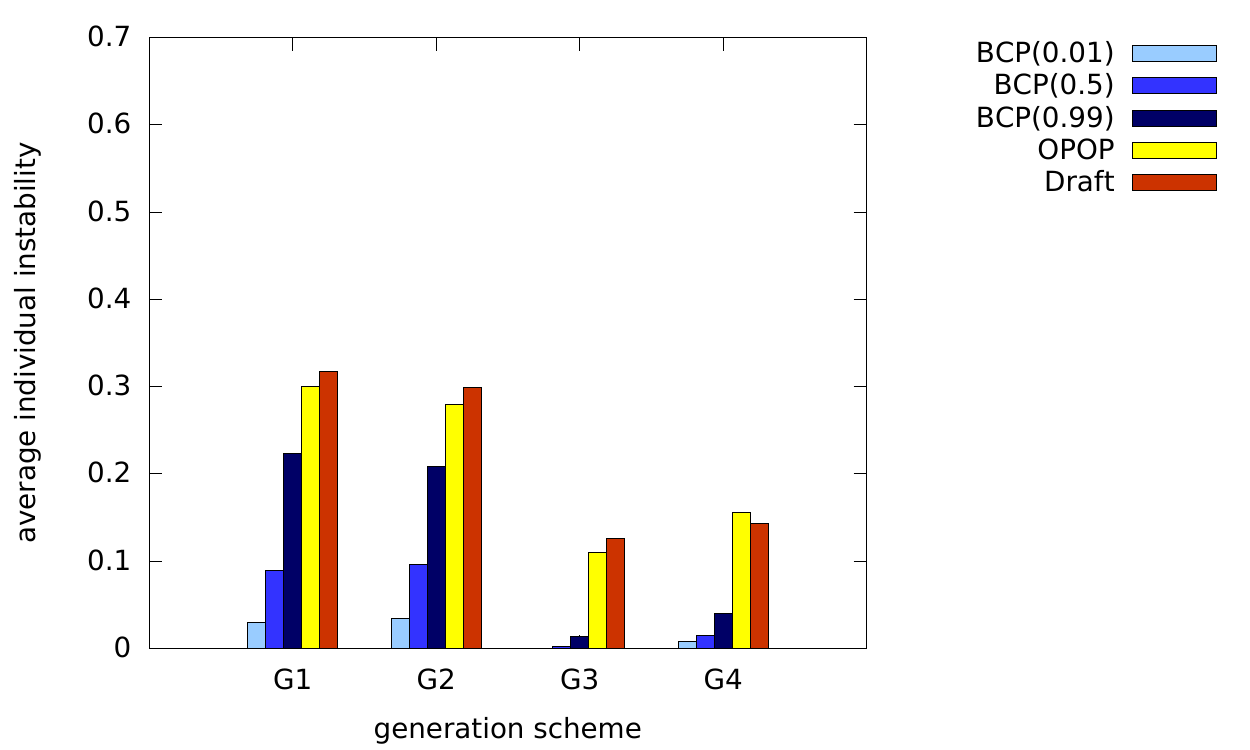}
	\end{minipage}
	\begin{minipage}[b]{0.5\linewidth}
		\centering
		\includegraphics[width=1\linewidth]{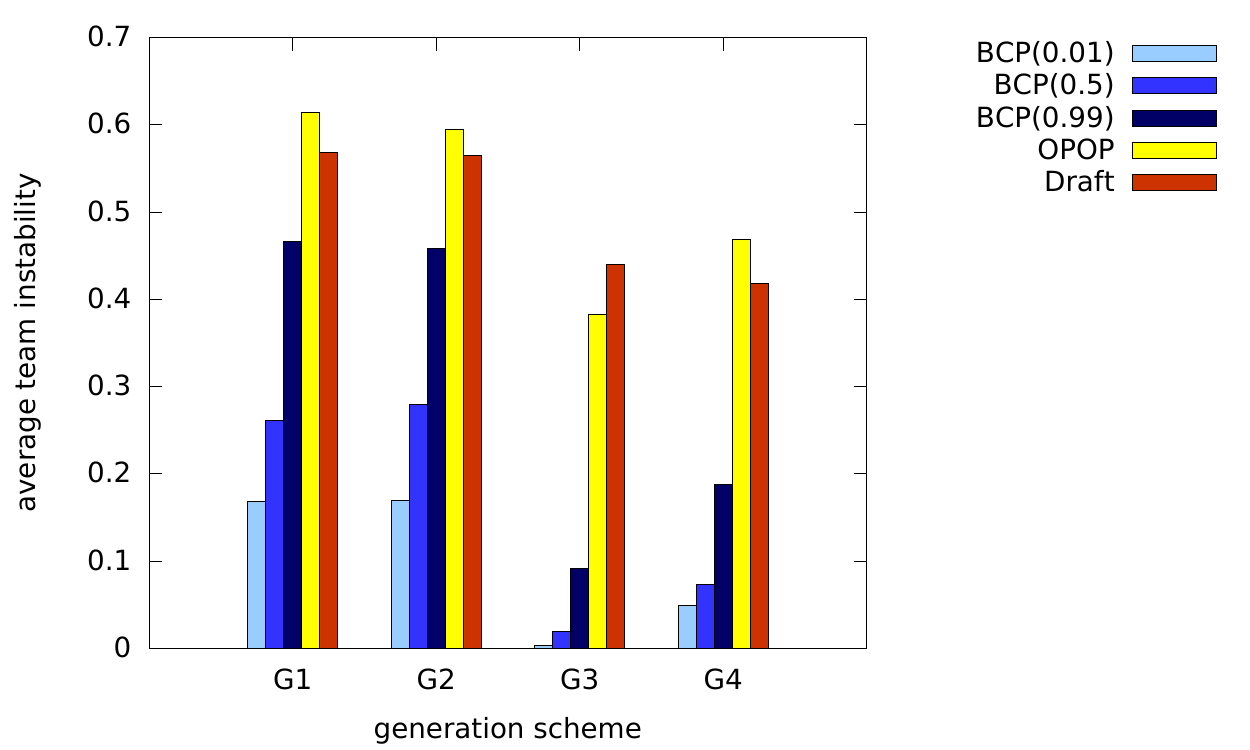} 
	\end{minipage}	
		\caption{Average individual instability (left) and average team instability (right)} 
		\label{fig:stab1}
\end{figure}




These charts show that regardless of $\objweight$, the solutions obtained by \textbf{BCP} are more stable, having lower average and maximum individual uplifts. Moreover, the average individual benefit of forming a coalition in the solutions obtained by \textbf{BCP} with $\alpha=0.01$ is less than 4\% of their current benefit, while in \textbf{OPOP} and \textbf{DRAFT}, it can be on average as high as 30\% of their current benefit.  Also, note that the maximum uplift is significantly lower in data generation schemes \textbf{G3} and \textbf{G4} in comparison to data \textbf{G1} and \textbf{G2}. (Though not shown, the relative rankings do not change if we focus on \emph{maximum} uplift as oppopsed to \emph{average}.)


%
%


\section{Conclusion and Future Work}
\label{sec:conclusion}
In this paper we investigate computational models for the team formation problem.  In order to balance intra-team utility with solution stability, we formulated a bi-level binary optimization model and developed a branch-cut-and-price solution algorithm. The pseudo-polynomial approach to the bi-level problem is itself an interesting contribution, and we detailed how to implement the resulting algorithm, which still has to manage an exponential number of variables \emph{and} constraints, demanding advanced computational methods, which we outline. Experimental results indicated that the proposed algorithm \textbf{BCP} is particularly effective at finding high-quality solutions quickly. Stability as an objective to be optimized over remains a particularly challenging computational problem, but we have shown a promising new approach.

Our results also indicate that ignoring stability can result in inferior solutions when Pareto improvements are available. Because a defection-based measure of instability in this context points to opportunities for profitable group deviation (i.e., strategic manipulation), these results will have practical implications. We showed that heuristic algorithms from the literature may be leaving efficiency and stability gains on the table, as might have been expected. Yet heuristics based on draft principles may be common in practice; we would argue that this has much to do with the inherent computational difficulty of optimization approaches, which we have shown can be mitigated by algorithms like \textbf{BCP}. We have shown that our new methods can improve total satisfaction, and improve incentives via reduced instability measures.

While \textbf{BCP} with $\objweight = 0.99$ performed very well on equity as a measure of fairness, we found an interesting trade-off, in which the market maker must in some cases accept inter-agent inequity in pursuit of stability (as $\objweight$ goes to zero), in particular it would seem, where common value is the primary driver of preferences. This discrepancy of inequity performance across preference models proved interesting, and we expect future studies to shed more light on how different models or descriptions of preferences affect the performance of various algorithms for team formation.

\clearpage

\bibliographystyle{plainnat} 
\bibliography{tfpbib} 

\clearpage 

%

\section{Appendix}

Table~\ref{tab:G1Details} through Table~\ref{tab:G4Details} provide detailed solution statistics on \textbf{G1}, \textbf{G2}, \textbf{G3}, and \textbf{G4}, respectively.  The first four columns report the number of agents, the number of groups, the maximum pairwise utility, and $\alpha$.  The next three sets of four columns report solution statistics for \textbf{EXP}, \textbf{BC}, and \textbf{BCP}, respectively. In particular, for each algorithm and each configuration, we report the number of instances solved in 1800 seconds ($n^s$), the average solution time over those $n^2$ instances ($\overline{\textnormal{time}}$), the number of instances that have a provable optimality gap $n^g$ (i.e., for \textbf{EXP} the number of instances that don't hit memory limits and for \textbf{BCP} the number of instances for which the root node was solved and an initial upper bound is proven), and finally the average gap over those $n^g$ instances.  The last two columns report the average total utility ($\utility^{\mathrm{best}}$) and maximum uplift ($\uplift^{\mathrm{best}}$) for the best solution identified by \textbf{BCP} for that configuration. 

Table~\ref{tab:CharCons} provides details on the effect of including characteristic constraints
(considering only those instances with $\utility^{\textnormal{max}} = 25$).  For each instance configuration and for $\alpha \in \set{0.01, 0.5, 0.99}$, the table reports, for the best solution identified by \textbf{BCP}, the average uplift ($\bar{r}$) and average total utility ($\bar{u}$) with and without characteristic constraints, for the instances from each generation scheme, in sequence.


\clearpage

\begin{table}[t!]
	\centering
	\tiny
	\caption{Detailed solution statistics for \textbf{G1}}
	\label{tab:G1Details}
	\begin{tabular}{llll|llll|llll|llll|ll}
		&&&&
		\multicolumn{4}{c|}{\textbf{EXP}} &
		\multicolumn{4}{c|}{\textbf{BC}} &
		\multicolumn{4}{c|}{\textbf{BCP}} &
		\multicolumn{2}{c}{ }
		\\ 
		$n$ &
		$m$ &
		$\utility^{\mathrm{max}}$ &
		$\alpha$ &
		$n^s$ &
		$\overline{\textnormal{time}}$ &
		$n^g$
		&
		$\overline{\textnormal{gap}}$ &
		$n^s$ &
		$\overline{\textnormal{time}}$ &
		$n^g$
		&
		$\overline{\textnormal{gap}}$ &
		$n^s$ &
		$\overline{\textnormal{time}}$ &
		$n^g$
		&
		$\overline{\textnormal{gap}}$ &
		$\utility^{\mathrm{best}}$ &
		$\uplift^{\mathrm{best}}$
		\\ \hline
		8  & 2 & 25  & 0    & 5 & 0.07  & 5 & 0     & 5 & 0.01 & 5 & 0     & 5 & 0.09  & 5 & 0     & 302   & 0     \\
		&   &     & 0.01 & 5 & 0.45  & 5 & 0     & 5 & 0.92 & 5 & 0     & 5 & 2.61  & 5 & 0     & 304.2 & 0     \\
		&   &     & 0.5  & 5 & 0.49  & 5 & 0     & 5 & 0.44 & 5 & 0     & 5 & 2.74  & 5 & 0     & 304.2 & 0     \\
		&   &     & 0.99 & 5 & 0.09  & 5 & 0     & 5 & 0.92 & 5 & 0     & 5 & 0.21  & 5 & 0     & 319   & 36.6  \\
		&   &     & 1    & 5 & 0.04  & 5 & 0     & 5 & 0.43 & 5 & 0     & 5 & 0.19  & 5 & 0     & 319   & 36.6  \\ \hline
		& 4 & 25  & 0    & 5 & 0.01  & 5 & 0     & 5 & 0.74 & 5 & 0     & 5 & 0.02  & 5 & 0     & 109   & 0     \\
		&   &     & 0.01 & 5 & 0.01  & 5 & 0     & 5 & 0.73 & 5 & 0     & 5 & 0.13  & 5 & 0     & 109   & 0     \\
		&   &     & 0.5  & 5 & 0.01  & 5 & 0     & 5 & 1.1  & 5 & 0     & 5 & 0.12  & 5 & 0     & 110.6 & 1     \\
		&   &     & 0.99 & 5 & 0.01  & 5 & 0     & 5 & 1.37 & 5 & 0     & 5 & 0.03  & 5 & 0     & 113.4 & 9.8   \\
		&   &     & 1    & 5 & 0.01  & 5 & 0     & 5 & 1.24 & 5 & 0     & 5 & 0.03  & 5 & 0     & 113.4 & 9.8   \\ \hline
		16 & 4 & 25  & 0    & 5 & 481.8 & 5 & 0     & 1 & 1607 & 5 & 23.2  & 0 & -     & 5 & 20.4  & 612.8 & 20.4  \\
		&   &     & 0.01 & 5 & 391.9 & 5 & 0     & 0 & -    & 5 & 16.48 & 1 & 832   & 5 & 3.51  & 633.4 & 3.4   \\
		&   &     & 0.5  & 5 & 528.2 & 5 & 0     & 0 & -    & 5 & 216.1 & 1 & 831.7 & 5 & 4.39  & 651.8 & 12.8  \\
		&   &     & 0.99 & 5 & 4.05  & 5 & 0     & 0 & -    & 5 & 371   & 5 & 4.42  & 5 & 0     & 666.6 & 47    \\
		&   &     & 1    & 5 & 0.62  & 5 & 0     & 0 & -    & 5 & 346.8 & 5 & 3.83  & 5 & 0     & 666.6 & 47.6  \\ \hline
		24 & 2 & 25  & 0    & 0 & -     & 0 & -     & 5 & 0.22 & 5 & 0     & 5 & 1.14  & 5 & 0     & 3313  & 0     \\
		&   &     & 0.01 & 0 & -     & 0 & -     & 0 & -    & 5 & 4.2   & 0 & -     & 4 & 1.01  & 3313  & 0     \\
		&   &     & 0.5  & 0 & -     & 0 & -     & 0 & -    & 5 & 225.3 & 0 & -     & 3 & 47.61 & 3313  & 0     \\
		&   &     & 0.99 & 0 & -     & 0 & -     & 0 & -    & 5 & 304.7 & 3 & 1346  & 3 & 0     & 3429  & 370.4 \\
		&   &     & 1    & 0 & -     & 0 & -     & 0 & -    & 5 & 367.2 & 0 & -     & 0 & -     & 3429  & 370.4 \\ \hline
		& 3 & 25  & 0    & 0 & -     & 0 & -     & 5 & 0.29 & 5 & 0     & 5 & 0.69  & 5 & 0     & 2094  & 0     \\ 
		&   &     & 0.01 & 0 & -     & 0 & -     & 0 & -    & 5 & 17.63 & 0 & -     & 5 & 1.48  & 2094  & 0     \\
		&   &     & 0.5  & 0 & -     & 0 & -     & 0 & -    & 5 & 864.1 & 0 & -     & 5 & 73.76 & 2094  & 0     \\
		&   &     & 0.99 & 0 & -     & 0 & -     & 0 & -    & 5 & 1605  & 5 & 1124  & 5 & 0     & 2246  & 209.2 \\
		&   &     & 1    & 0 & -     & 0 & -     & 0 & -    & 5 & 1650  & 5 & 633.7 & 5 & 0     & 2246  & 209.2 \\ \hline
		& 4 & 25  & 0    & 0 & -     & 0 & -     & 0 & -    & 5 & 124.6 & 0 & -     & 5 & 49.6  & 1487  & 49.6  \\
		&   &     & 0.01 & 0 & -     & 0 & -     & 0 & -    & 5 & 114.1 & 0 & -     & 5 & 45.56 & 1510  & 44.8  \\
		&   &     & 0.5  & 0 & -     & 0 & -     & 0 & -    & 5 & 709.7 & 0 & -     & 5 & 40.6  & 1625  & 75.8  \\
		&   &     & 0.99 & 0 & -     & 0 & -     & 0 & -    & 5 & 1283  & 5 & 228.8 & 5 & 0     & 1637  & 126.6 \\
		&   &     & 1    & 5 & 130   & 5 & 0     & 0 & -    & 5 & 1289  & 5 & 184.1 & 5 & 0     & 1637  & 126.6 \\ \hline
		& 6 & 5   & 0    & 0 & -     & 5 & 4.8   & 0 & -    & 5 & 8.2   & 0 & -     & 5 & 8.8   & 179.4 & 8.8   \\
		&   &     & 0.01 & 0 & -     & 5 & 6.74  & 0 & -    & 5 & 11.06 & 0 & -     & 5 & 4.43  & 200.6 & 4.4   \\
		&   &     & 0.5  & 1 & 1122  & 5 & 1.2   & 0 & -    & 5 & 86.3  & 0 & -     & 5 & 2.77  & 209.6 & 7.2   \\
		&   &     & 0.99 & 5 & 27.35 & 5 & 0     & 0 & -    & 5 & 160.7 & 5 & 50.38 & 5 & 0     & 210.6 & 9     \\
		&   &     & 1    & 5 & 1.67  & 5 & 0     & 0 & -    & 5 & 160.2 & 5 & 17.98 & 5 & 0     & 210.6 & 10.4  \\ \hline
		&   & 25  & 0    & 0 & -     & 5 & 42.2  & 0 & -    & 5 & 63.8  & 0 & -     & 5 & 33.2  & 903.6 & 33.2  \\
		&   &     & 0.01 & 0 & -     & 5 & 31.32 & 0 & -    & 5 & 49.34 & 0 & -     & 5 & 20.35 & 999   & 20.4  \\
		&   &     & 0.5  & 0 & -     & 5 & 20.9  & 0 & -    & 5 & 363.9 & 0 & -     & 5 & 16.3  & 1010  & 28.4  \\
		&   &     & 0.99 & 5 & 50.38 & 5 & 0     & 0 & -    & 5 & 676.5 & 5 & 35.75 & 5 & 0     & 1022  & 56.2  \\
		&   &     & 1    & 5 & 4.99  & 5 & 0     & 0 & -    & 5 & 681.8 & 5 & 31.1  & 5 & 0     & 1022  & 56.2  \\ \hline
		&   & 100 & 0    & 0 & -     & 5 & 250.4 & 0 & -    & 5 & 245   & 0 & -     & 5 & 133.8 & 3620  & 133.8 \\
		&   &     & 0.01 & 0 & -     & 5 & 131.1 & 0 & -    & 5 & 217.9 & 0 & -     & 5 & 69.01 & 3862  & 67.4  \\
		&   &     & 0.5  & 0 & -     & 5 & 119.1 & 0 & -    & 5 & 1565  & 0 & -     & 5 & 74.54 & 4063  & 129.2 \\
		&   &     & 0.99 & 5 & 508.7 & 5 & 0     & 0 & -    & 5 & 2881  & 5 & 107.4 & 5 & 0     & 4096  & 229.6 \\
		&   &     & 1    & 5 & 14.98 & 5 & 0     & 0 & -    & 5 & 2899  & 5 & 82.16 & 5 & 0     & 4096  & 229.6 \\ \hline
		32 & 4 & 25  & 0    & 0 & -     & 0 & -     & 1 & 0.85 & 5 & 207.2 & 1 & 1.29  & 5 & 65    & 2777  & 65    \\
		&   &     & 0.01 & 0 & -     & 0 & -     & 0 & -    & 5 & 172.1 & 0 & -     & 0 & -     & 2769  & 66.4  \\
		&   &     & 0.5  & 0 & -     & 0 & -     & 0 & -    & 5 & 1325  & 0 & -     & 0 & -     & 2984  & 171.4 \\
		&   &     & 0.99 & 0 & -     & 0 & -     & 0 & -    & 5 & 2413  & 0 & -     & 0 & -     & 3028  & 240.8 \\
		&   &     & 1    & 0 & -     & 0 & -     & 0 & -    & 5 & 2442  & 0 & -     & 0 & -     & 3028  & 233.4 \\ \hline
		& 8 & 25  & 0    & 0 & -     & 5 & 63.6  & 0 & -    & 5 & 65    & 0 & -     & 5 & 41.6  & 1199  & 41.6  \\
		&   &     & 0.01 & 0 & -     & 5 & 39.59 & 0 & -    & 5 & 69.91 & 0 & -     & 5 & 27.58 & 1298  & 27    \\
		&   &     & 0.5  & 0 & -     & 5 & 30.3  & 0 & -    & 5 & 550.2 & 0 & -     & 5 & 25.56 & 1378  & 45.8  \\
		&   &     & 0.99 & 5 & 465.7 & 5 & 0     & 0 & -    & 5 & 1024  & 5 & 164.7 & 5 & 0     & 1385  & 64.4  \\
		&   &     & 1    & 5 & 15.7  & 5 & 0     & 0 & -    & 5 & 1036  & 5 & 109.5 & 5 & 0     & 1385  & 65.2 
	\end{tabular}
\end{table}

\begin{table}[]
	\centering
	\tiny
	\caption{Detailed solution statistics for \textbf{G2}}
	\label{tab:G2Details}
	\begin{tabular}{llll|llll|llll|llll|ll}
		&&&&
		\multicolumn{4}{c|}{\textbf{EXP}} &
		\multicolumn{4}{c|}{\textbf{BC}} &
		\multicolumn{4}{c|}{\textbf{BCP}} &
		\multicolumn{2}{c}{ }
		\\ 
		$n$ &
		$m$ &
		$\utility^{\mathrm{max}}$ &
		$\alpha$ &
		$n^s$ &
		$\overline{\textnormal{time}}$ &
		$n^g$
		&
		$\overline{\textnormal{gap}}$ &
		$n^s$ &
		$\overline{\textnormal{time}}$ &
		$n^g$
		&
		$\overline{\textnormal{gap}}$ &
		$n^s$ &
		$\overline{\textnormal{time}}$ &
		$n^g$
		&
		$\overline{\textnormal{gap}}$ &
		$\utility^{\mathrm{best}}$ &
		$\uplift^{\mathrm{best}}$
		\\ \hline
		8  & 2 & 25  & 0    & 5 & 0.07  & 5 & 0     & 5 & 0.12 & 5 & 0     & 5 & 0.49  & 5 & 0     & 300.8 & 0     \\
		&   &     & 0.01 & 5 & 0.23  & 5 & 0     & 5 & 0.52 & 5 & 0     & 5 & 3.41  & 5 & 0     & 305   & 0     \\
		&   &     & 0.5  & 5 & 0.36  & 5 & 0     & 5 & 0.42 & 5 & 0     & 5 & 3.05  & 5 & 0     & 305   & 0     \\
		&   &     & 0.99 & 5 & 0.09  & 5 & 0     & 5 & 0.62 & 5 & 0     & 5 & 0.17  & 5 & 0     & 315.8 & 29.6  \\
		&   &     & 1    & 5 & 0.04  & 5 & 0     & 5 & 0.45 & 5 & 0     & 5 & 0.16  & 5 & 0     & 315.8 & 29.6  \\ \hline
		& 4 & 25  & 0    & 5 & 0.01  & 5 & 0     & 5 & 0.72 & 5 & 0     & 5 & 1.02  & 5 & 0     & 106.6 & 2     \\
		&   &     & 0.01 & 5 & 0.01  & 5 & 0     & 5 & 0.72 & 5 & 0     & 5 & 0.97  & 5 & 0     & 106.6 & 2     \\
		&   &     & 0.5  & 5 & 0.01  & 5 & 0     & 5 & 0.89 & 5 & 0     & 5 & 0.3   & 5 & 0     & 110.4 & 2.4   \\
		&   &     & 0.99 & 5 & 0.01  & 5 & 0     & 5 & 0.96 & 5 & 0     & 5 & 0.04  & 5 & 0     & 113.2 & 8.6   \\
		&   &     & 1    & 5 & 0.01  & 5 & 0     & 5 & 0.95 & 5 & 0     & 5 & 0.03  & 5 & 0     & 113.2 & 9.2   \\ \hline
		16 & 4 & 25  & 0    & 5 & 698.3 & 5 & 0     & 1 & 0.08 & 5 & 26.8  & 1 & 0.22  & 5 & 16.4  & 612.4 & 16.4  \\
		&   &     & 0.01 & 4 & 433.5 & 5 & 3.8   & 0 & -    & 5 & 17.43 & 1 & 449.1 & 5 & 6.67  & 636.2 & 6.6   \\
		&   &     & 0.5  & 5 & 520.3 & 5 & 0     & 0 & -    & 5 & 211.9 & 1 & 528.7 & 5 & 5.13  & 652.8 & 14    \\
		&   &     & 0.99 & 5 & 3.1   & 5 & 0     & 0 & -    & 5 & 376.4 & 5 & 5.66  & 5 & 0     & 668   & 44.4  \\
		&   &     & 1    & 5 & 0.62  & 5 & 0     & 0 & -    & 5 & 364.6 & 5 & 4.47  & 5 & 0     & 668   & 44.4  \\ \hline
		24 & 2 & 25  & 0    & 0 & -     & 0 & -     & 5 & 0.22 & 5 & 0     & 5 & 1.18  & 5 & 0     & 3305  & 0     \\
		&   &     & 0.01 & 0 & -     & 0 & -     & 0 & -    & 5 & 4.43  & 0 & -     & 2 & 0.95  & 3305  & 0     \\
		&   &     & 0.5  & 0 & -     & 0 & -     & 0 & -    & 5 & 208.9 & 0 & -     & 2 & 46.25 & 3305  & 0     \\
		&   &     & 0.99 & 0 & -     & 0 & -     & 0 & -    & 5 & 367.3 & 1 & 1315  & 2 & 1.07  & 3419  & 369.6 \\
		&   &     & 1    & 0 & -     & 0 & -     & 0 & -    & 5 & 355   & 0 & -     & 0 & -     & 3419  & 369.6 \\ \hline
		& 3 & 25  & 0    & 0 & -     & 0 & -     & 4 & 0.27 & 5 & 48.4  & 4 & 0.68  & 5 & 9.8   & 2101  & 9.8   \\
		&   &     & 0.01 & 0 & -     & 0 & -     & 0 & -    & 5 & 29.77 & 0 & -     & 5 & 19.67 & 2096  & 18.4  \\
		&   &     & 0.5  & 0 & -     & 0 & -     & 0 & -    & 5 & 856.2 & 0 & -     & 5 & 71.42 & 2124  & 27.6  \\
		&   &     & 0.99 & 0 & -     & 0 & -     & 0 & -    & 5 & 1633  & 5 & 834.5 & 5 & 0     & 2247  & 202.6 \\
		&   &     & 1    & 0 & -     & 0 & -     & 0 & -    & 5 & 1644  & 5 & 827.7 & 5 & 0     & 2247  & 202.6 \\ \hline
		& 4 & 25  & 0    & 0 & -     & 0 & -     & 0 & -    & 5 & 128.2 & 0 & -     & 5 & 34.8  & 1502  & 34.8  \\
		&   &     & 0.01 & 0 & -     & 0 & -     & 0 & -    & 5 & 105.2 & 0 & -     & 4 & 35.59 & 1525  & 43.6  \\
		&   &     & 0.5  & 0 & -     & 0 & -     & 0 & -    & 5 & 668.3 & 0 & -     & 5 & 44.38 & 1622  & 74.8  \\
		&   &     & 0.99 & 0 & -     & 0 & -     & 0 & -    & 5 & 1200  & 5 & 301.8 & 5 & 0     & 1642  & 118   \\
		&   &     & 1    & 5 & 138.6 & 5 & 0     & 0 & -    & 5 & 1223  & 5 & 172.3 & 5 & 0     & 1642  & 118   \\ \hline
		& 6 & 5   & 0    & 0 & -     & 5 & 4     & 0 & -    & 5 & 7.8   & 0 & -     & 5 & 7.2   & 183.2 & 7.2   \\
		&   &     & 0.01 & 0 & -     & 5 & 5.5   & 0 & -    & 5 & 11.1  & 0 & -     & 5 & 5.41  & 201   & 5.4   \\
		&   &     & 0.5  & 2 & 1462  & 5 & 1.8   & 0 & -    & 5 & 71.1  & 0 & -     & 5 & 2.85  & 208.6 & 6.8   \\
		&   &     & 0.99 & 5 & 27.31 & 5 & 0     & 0 & -    & 5 & 129.4 & 5 & 39.32 & 5 & 0     & 210.2 & 9.8   \\
		&   &     & 1    & 5 & 1.48  & 5 & 0     & 0 & -    & 5 & 130.8 & 5 & 15.19 & 5 & 0     & 210.2 & 11.8  \\ \hline
		&   & 25  & 0    & 0 & -     & 5 & 37.6  & 0 & -    & 5 & 61.8  & 0 & -     & 5 & 39    & 904   & 39    \\
		&   &     & 0.01 & 0 & -     & 5 & 32.72 & 0 & -    & 5 & 54.94 & 0 & -     & 5 & 22.39 & 969.4 & 22.2  \\
		&   &     & 0.5  & 0 & -     & 5 & 21.9  & 0 & -    & 5 & 366.2 & 0 & -     & 5 & 18.33 & 1010  & 37.4  \\
		&   &     & 0.99 & 5 & 56.24 & 5 & 0     & 0 & -    & 5 & 673.6 & 5 & 45.67 & 5 & 0     & 1017  & 58.4  \\
		&   &     & 1    & 5 & 4.67  & 5 & 0     & 0 & -    & 5 & 684.2 & 5 & 27.76 & 5 & 0     & 1017  & 63.6  \\ \hline
		&   & 100 & 0    & 0 & -     & 5 & 229.8 & 0 & -    & 5 & 252   & 0 & -     & 5 & 133.6 & 3673  & 133.6 \\
		&   &     & 0.01 & 0 & -     & 5 & 134.6 & 0 & -    & 5 & 230.1 & 0 & -     & 5 & 83.74 & 3964  & 83.4  \\
		&   &     & 0.5  & 0 & -     & 5 & 104.3 & 0 & -    & 5 & 1616  & 0 & -     & 5 & 76.22 & 4074  & 144   \\
		&   &     & 0.99 & 5 & 150.7 & 5 & 0     & 0 & -    & 5 & 2960  & 5 & 90.41 & 5 & 0     & 4098  & 218.2 \\
		&   &     & 1    & 5 & 14.22 & 5 & 0     & 0 & -    & 5 & 2929  & 5 & 50.88 & 5 & 0     & 4098  & 218.2 \\ \hline
		32 & 4 & 25  & 0    & 0 & -     & 0 & -     & 3 & 0.85 & 5 & 92.8  & 3 & 1.39  & 5 & 21.6  & 2792  & 21.6  \\
		&   &     & 0.01 & 0 & -     & 0 & -     & 0 & -    & 5 & 107.2 & 0 & -     & 0 & -     & 2792  & 21.6  \\
		&   &     & 0.5  & 0 & -     & 0 & -     & 0 & -    & 5 & 1294  & 0 & -     & 0 & -     & 2963  & 172   \\
		&   &     & 0.99 & 0 & -     & 0 & -     & 0 & -    & 5 & 2418  & 0 & -     & 0 & -     & 3011  & 247.4 \\ 
		&   &     & 1    & 0 & -     & 0 & -     & 0 & -    & 5 & 2434  & 0 & -     & 0 & -     & 3019  & 236.4 \\ \hline
		& 8 & 25  & 0    & 0 & -     & 5 & 65.8  & 0 & -    & 5 & 59.2  & 0 & -     & 5 & 39.4  & 1211  & 39.4  \\
		&   &     & 0.01 & 0 & -     & 5 & 40.79 & 0 & -    & 5 & 75.04 & 0 & -     & 5 & 22.3  & 1317  & 21.8  \\
		&   &     & 0.5  & 0 & -     & 5 & 31.8  & 0 & -    & 5 & 527.6 & 0 & -     & 5 & 22    & 1376  & 34.8  \\
		&   &     & 0.99 & 5 & 594.2 & 5 & 0     & 0 & -    & 5 & 954.2 & 5 & 129.2 & 5 & 0     & 1389  & 64    \\
		&   &     & 1    & 5 & 15.15 & 5 & 0     & 0 & -    & 5 & 971.8 & 5 & 105.2 & 5 & 0     & 1389  & 64   
	\end{tabular}
\end{table}

\begin{table}[]
	\centering
	\tiny
	\caption{Detailed solution statistics for \textbf{G3}}
	\label{tab:G3Details}
	\begin{tabular}{llll|llll|llll|llll|ll}
		&&&&
		\multicolumn{4}{c|}{\textbf{EXP}} &
		\multicolumn{4}{c|}{\textbf{BC}} &
		\multicolumn{4}{c|}{\textbf{BCP}} &
		\multicolumn{2}{c}{ }
		\\ 
		$n$ &
		$m$ &
		$\utility^{\mathrm{max}}$ &
		$\alpha$ &
		$n^s$ &
		$\overline{\textnormal{time}}$ &
		$n^g$
		&
		$\overline{\textnormal{gap}}$ &
		$n^s$ &
		$\overline{\textnormal{time}}$ &
		$n^g$
		&
		$\overline{\textnormal{gap}}$ &
		$n^s$ &
		$\overline{\textnormal{time}}$ &
		$n^g$
		&
		$\overline{\textnormal{gap}}$ &
		$\utility^{\mathrm{best}}$ &
		$\uplift^{\mathrm{best}}$
		\\ \hline
		8  & 2 & 25  & 0    & 5 & 0.29  & 5 & 0     & 5 & 1.26  & 5 & 0     & 5 & 0.35  & 5 & 0     & 325.2 & 0     \\
		&   &     & 0.01 & 5 & 0.09  & 5 & 0     & 5 & 1.16  & 5 & 0     & 5 & 0.16  & 5 & 0     & 329.8 & 0     \\
		&   &     & 0.5  & 5 & 0.09  & 5 & 0     & 5 & 0.75  & 5 & 0     & 5 & 0.16  & 5 & 0     & 329.8 & 0     \\
		&   &     & 0.99 & 5 & 0.09  & 5 & 0     & 5 & 0.98  & 5 & 0     & 5 & 0.16  & 5 & 0     & 330   & 2.8   \\
		&   &     & 1    & 5 & 0.04  & 5 & 0     & 5 & 0.94  & 5 & 0     & 5 & 0.17  & 5 & 0     & 330   & 2.8   \\ \hline
		& 4 & 25  & 0    & 5 & 0.01  & 5 & 0     & 5 & 0.97  & 5 & 0     & 5 & 0.13  & 5 & 0     & 126.6 & 1     \\
		&   &     & 0.01 & 5 & 0.01  & 5 & 0     & 5 & 0.96  & 5 & 0     & 5 & 0.2   & 5 & 0     & 126.6 & 1     \\
		&   &     & 0.5  & 5 & 0.01  & 5 & 0     & 5 & 1.1   & 5 & 0     & 5 & 0.08  & 5 & 0     & 127.8 & 1.6   \\
		&   &     & 0.99 & 5 & 0.01  & 5 & 0     & 5 & 1.31  & 5 & 0     & 5 & 0.03  & 5 & 0     & 128.8 & 4     \\
		&   &     & 1    & 5 & 0.01  & 5 & 0     & 5 & 1.47  & 5 & 0     & 5 & 0.03  & 5 & 0     & 128.8 & 6.6   \\ \hline
		16 & 4 & 25  & 0    & 2 & 832.6 & 5 & 7.6   & 0 & -     & 5 & 27.8  & 3 & 327.8 & 5 & 7     & 679.4 & 7     \\
		&   &     & 0.01 & 5 & 134   & 5 & 0     & 0 & -     & 5 & 5.1   & 5 & 36.74 & 5 & 0     & 713.4 & 0     \\
		&   &     & 0.5  & 5 & 39.15 & 5 & 0     & 0 & -     & 5 & 213   & 5 & 28.86 & 5 & 0     & 718   & 4.6   \\
		&   &     & 0.99 & 5 & 3.29  & 5 & 0     & 0 & -     & 5 & 439.2 & 5 & 2.32  & 5 & 0     & 720.8 & 11.6  \\
		&   &     & 1    & 5 & 0.71  & 5 & 0     & 0 & -     & 5 & 413.8 & 5 & 2.03  & 5 & 0     & 720.8 & 11.6  \\ \hline
		24 & 2 & 25  & 0    & 0 & -     & 0 & -     & 4 & 92.82 & 5 & 19.8  & 5 & 309   & 5 & 0     & 3382  & 0     \\
		&   &     & 0.01 & 0 & -     & 0 & -     & 0 & -     & 5 & 4.11  & 4 & 585.2 & 4 & 0     & 3516  & 0     \\
		&   &     & 0.5  & 0 & -     & 0 & -     & 1 & 1561  & 5 & 135   & 4 & 418   & 4 & 0     & 3516  & 0     \\
		&   &     & 0.99 & 0 & -     & 0 & -     & 1 & 1706  & 5 & 218.6 & 4 & 428.6 & 4 & 0     & 3516  & 0     \\
		&   &     & 1    & 0 & -     & 0 & -     & 2 & 1459  & 5 & 215.2 & 4 & 432.6 & 4 & 0     & 3516  & 0     \\ \hline
		& 3 & 25  & 0    & 0 & -     & 0 & -     & 0 & -     & 5 & 113.4 & 0 & -     & 5 & 86.6  & 2186  & 86.6  \\
		&   &     & 0.01 & 0 & -     & 0 & -     & 0 & -     & 5 & 63.1  & 5 & 112.8 & 5 & 0     & 2347  & 0     \\
		&   &     & 0.5  & 0 & -     & 0 & -     & 0 & -     & 5 & 946.5 & 5 & 114.7 & 5 & 0     & 2347  & 0     \\
		&   &     & 0.99 & 0 & -     & 0 & -     & 0 & -     & 5 & 1829  & 5 & 115.7 & 5 & 0     & 2347  & 0     \\
		&   &     & 1    & 0 & -     & 0 & -     & 0 & -     & 5 & 1878  & 5 & 114.2 & 5 & 0     & 2347  & 0     \\ \hline
		& 4 & 25  & 0    & 0 & -     & 0 & -     & 0 & -     & 5 & 85    & 0 & -     & 5 & 72.2  & 1584  & 72.2  \\
		&   &     & 0.01 & 0 & -     & 0 & -     & 0 & -     & 5 & 65.9  & 5 & 61.93 & 5 & 0     & 1756  & 0     \\
		&   &     & 0.5  & 0 & -     & 0 & -     & 0 & -     & 5 & 698.3 & 5 & 56.21 & 5 & 0     & 1756  & 0     \\
		&   &     & 0.99 & 0 & -     & 0 & -     & 0 & -     & 5 & 1322  & 5 & 39.82 & 5 & 0     & 1757  & 6.6   \\
		&   &     & 1    & 5 & 131.2 & 5 & 0     & 0 & -     & 5 & 1336  & 5 & 45.35 & 5 & 0     & 1757  & 6.6   \\ \hline
		& 6 & 5   & 0    & 0 & -     & 5 & 7.4   & 0 & -     & 5 & 7.2   & 0 & -     & 5 & 8.4   & 197   & 8.4   \\
		&   &     & 0.01 & 0 & -     & 5 & 2.37  & 0 & -     & 5 & 8.66  & 0 & -     & 5 & 1.19  & 224.6 & 1.2   \\
		&   &     & 0.5  & 5 & 215.1 & 5 & 0     & 0 & -     & 5 & 79.4  & 5 & 601.9 & 5 & 0     & 229   & 3.2   \\
		&   &     & 0.99 & 5 & 12.81 & 5 & 0     & 0 & -     & 5 & 148.8 & 5 & 13.55 & 5 & 0     & 229.8 & 5.4   \\
		&   &     & 1    & 5 & 1.46  & 5 & 0     & 0 & -     & 5 & 151.4 & 5 & 8.43  & 5 & 0     & 229.8 & 6     \\ \hline
		&   & 25  & 0    & 0 & -     & 5 & 40    & 0 & -     & 5 & 52.4  & 0 & -     & 5 & 42.6  & 955   & 42.6  \\
		&   &     & 0.01 & 2 & 833.7 & 5 & 5.76  & 0 & -     & 5 & 44.14 & 5 & 543.8 & 5 & 0     & 1118  & 0     \\
		&   &     & 0.5  & 4 & 746.6 & 5 & 0.4   & 0 & -     & 5 & 415   & 5 & 184.2 & 5 & 0     & 1125  & 4.2   \\
		&   &     & 0.99 & 5 & 58.2  & 5 & 0     & 0 & -     & 5 & 756.8 & 5 & 27.27 & 5 & 0     & 1128  & 25.6  \\
		&   &     & 1    & 5 & 4.87  & 5 & 0     & 0 & -     & 5 & 772.6 & 5 & 15.77 & 5 & 0     & 1128  & 25.8  \\ \hline
		&   & 100 & 0    & 0 & -     & 5 & 239.4 & 0 & -     & 5 & 217.6 & 0 & -     & 5 & 178.2 & 3840  & 178.2 \\
		&   &     & 0.01 & 2 & 88.69 & 5 & 38.18 & 0 & -     & 5 & 197.2 & 5 & 241.4 & 5 & 0     & 4489  & 0     \\
		&   &     & 0.5  & 2 & 75.23 & 3 & 4.33  & 0 & -     & 5 & 1724  & 5 & 195.1 & 5 & 0     & 4489  & 0     \\
		&   &     & 0.99 & 5 & 121.4 & 5 & 0     & 0 & -     & 5 & 3195  & 5 & 41.02 & 5 & 0     & 4503  & 54.6  \\
		&   &     & 1    & 5 & 15.76 & 5 & 0     & 0 & -     & 5 & 3187  & 5 & 34.15 & 5 & 0     & 4503  & 58.6  \\ \hline
		32 & 4 & 25  & 0    & 0 & -     & 0 & -     & 0 & -     & 5 & 127.6 & 0 & -     & 5 & 127.8 & 2869  & 127.8 \\
		&   &     & 0.01 & 0 & -     & 0 & -     & 0 & -     & 5 & 128.6 & 3 & 921.6 & 5 & 0.02  & 3204  & 0     \\
		&   &     & 0.5  & 0 & -     & 0 & -     & 0 & -     & 5 & 1371  & 4 & 976.8 & 5 & 0.33  & 3205  & 0     \\
		&   &     & 0.99 & 0 & -     & 0 & -     & 0 & -     & 5 & 2610  & 5 & 980.2 & 5 & 0     & 3208  & 16.4  \\
		&   &     & 1    & 0 & -     & 0 & -     & 0 & -     & 5 & 2623  & 4 & 669.6 & 5 & 0.31  & 3208  & 23.4  \\ \hline
		& 8 & 25  & 0    & 0 & -     & 5 & 56.8  & 0 & -     & 5 & 63.6  & 0 & -     & 5 & 60    & 1228  & 60    \\
		&   &     & 0.01 & 0 & -     & 5 & 18.65 & 0 & -     & 5 & 67.79 & 0 & -     & 5 & 1.19  & 1530  & 1     \\
		&   &     & 0.5  & 1 & 1244  & 5 & 7.5   & 0 & -     & 5 & 568.8 & 2 & 868.6 & 5 & 1.57  & 1550  & 12.6  \\
		&   &     & 0.99 & 5 & 210   & 5 & 0     & 0 & -     & 5 & 1089  & 5 & 68.13 & 5 & 0     & 1554  & 20.6  \\
		&   &     & 1    & 5 & 15.06 & 5 & 0     & 0 & -     & 5 & 1084  & 5 & 55.95 & 5 & 0     & 1554  & 27   
	\end{tabular}
\end{table}

\begin{table}[]
	\centering
	\tiny
	\caption{Detailed solution statistics for \textbf{G4}}
	\label{tab:G4Details}
	\begin{tabular}{llll|llll|llll|llll|ll}
		&&&&
		\multicolumn{4}{c|}{\textbf{EXP}} &
		\multicolumn{4}{c|}{\textbf{BC}} &
		\multicolumn{4}{c|}{\textbf{BCP}} &
		\multicolumn{2}{c}{ }
		\\ 
		$n$ &
		$m$ &
		$\utility^{\mathrm{max}}$ &
		$\alpha$ &
		$n^s$ &
		$\overline{\textnormal{time}}$ &
		$n^g$
		&
		$\overline{\textnormal{gap}}$ &
		$n^s$ &
		$\overline{\textnormal{time}}$ &
		$n^g$
		&
		$\overline{\textnormal{gap}}$ &
		$n^s$ &
		$\overline{\textnormal{time}}$ &
		$n^g$
		&
		$\overline{\textnormal{gap}}$ & 
		$\utility^{\mathrm{best}}$ &
		$\uplift^{\mathrm{best}}$
		\\ \hline
		8  & 2 & 25  & 0    & 5 & 0.19  & 5 & 0     & 5 & 1.29  & 5 & 0     & 5 & 0.36  & 5 & 0     & 324.6 & 0     \\
		&   &     & 0.01 & 5 & 0.09  & 5 & 0     & 5 & 0.78  & 5 & 0     & 5 & 0.37  & 5 & 0     & 332.2 & 0     \\
		&   &     & 0.5  & 5 & 0.11  & 5 & 0     & 5 & 0.39  & 5 & 0     & 5 & 0.35  & 5 & 0     & 332.2 & 0     \\
		&   &     & 0.99 & 5 & 0.08  & 5 & 0     & 5 & 0.46  & 5 & 0     & 5 & 0.16  & 5 & 0     & 335.8 & 7.4   \\
		&   &     & 1    & 5 & 0.04  & 5 & 0     & 5 & 0.59  & 5 & 0     & 5 & 0.19  & 5 & 0     & 335.8 & 7.4   \\ \hline
		& 4 & 25  & 0    & 5 & 0.01  & 5 & 0     & 5 & 0.51  & 5 & 0     & 5 & 0.3   & 5 & 0     & 121.6 & 0.6   \\
		&   &     & 0.01 & 5 & 0.01  & 5 & 0     & 5 & 0.47  & 5 & 0     & 5 & 0.28  & 5 & 0     & 121.6 & 0.6   \\
		&   &     & 0.5  & 5 & 0.01  & 5 & 0     & 5 & 0.48  & 5 & 0     & 5 & 0.06  & 5 & 0     & 124.8 & 1.6   \\
		&   &     & 0.99 & 5 & 0.01  & 5 & 0     & 5 & 0.41  & 5 & 0     & 5 & 0.03  & 5 & 0     & 124.8 & 1.6   \\
		&   &     & 1    & 5 & 0.01  & 5 & 0     & 5 & 0.38  & 5 & 0     & 5 & 0.03  & 5 & 0     & 124.8 & 5.4   \\ \hline
		16 & 4 & 25  & 0    & 5 & 600.2 & 5 & 0     & 0 & -     & 5 & 35.4  & 0 & -     & 5 & 32.4  & 653   & 32.4  \\
		&   &     & 0.01 & 5 & 219.9 & 5 & 0     & 0 & -     & 5 & 12.19 & 5 & 413.6 & 5 & 0     & 704   & 0     \\
		&   &     & 0.5  & 5 & 38.45 & 5 & 0     & 0 & -     & 5 & 154.7 & 5 & 71.4  & 5 & 0     & 710.4 & 1.6   \\
		&   &     & 0.99 & 5 & 4.35  & 5 & 0     & 0 & -     & 5 & 283.3 & 5 & 3.35  & 5 & 0     & 723.2 & 21.8  \\
		&   &     & 1    & 5 & 0.64  & 5 & 0     & 0 & -     & 5 & 276.2 & 5 & 3.18  & 5 & 0     & 723.2 & 21.8  \\ \hline
		24 & 2 & 25  & 0    & 0 & -     & 0 & -     & 0 & -     & 5 & 224   & 1 & 383.4 & 5 & 148.2 & 3397  & 148.2 \\
		&   &     & 0.01 & 0 & -     & 0 & -     & 2 & 1344  & 5 & 5.56  & 4 & 388.8 & 4 & 0     & 3678  & 0     \\
		&   &     & 0.5  & 0 & -     & 0 & -     & 4 & 1068  & 5 & 28.4  & 5 & 372.5 & 5 & 0     & 3678  & 0     \\
		&   &     & 0.99 & 0 & -     & 0 & -     & 5 & 1024  & 5 & 0     & 4 & 324.8 & 4 & 0     & 3680  & 10.4  \\
		&   &     & 1    & 0 & -     & 0 & -     & 5 & 973.1 & 5 & 0     & 5 & 329.2 & 5 & 0     & 3680  & 10.4  \\ \hline
		& 3 & 25  & 0    & 0 & -     & 0 & -     & 0 & -     & 5 & 185.2 & 0 & -     & 5 & 143.2 & 2159  & 143.2 \\
		&   &     & 0.01 & 0 & -     & 0 & -     & 0 & -     & 5 & 108.3 & 4 & 225.2 & 5 & 0     & 2452  & 0     \\
		&   &     & 0.5  & 0 & -     & 0 & -     & 0 & -     & 5 & 806.1 & 4 & 219.9 & 5 & 0.12  & 2452  & 0     \\
		&   &     & 0.99 & 0 & -     & 0 & -     & 0 & -     & 5 & 1429  & 5 & 255.6 & 5 & 0     & 2462  & 35.6  \\
		&   &     & 1    & 0 & -     & 0 & -     & 0 & -     & 5 & 1536  & 5 & 79.09 & 5 & 0     & 2462  & 35.6  \\ \hline
		& 4 & 25  & 0    & 0 & -     & 0 & -     & 0 & -     & 5 & 115.4 & 0 & -     & 5 & 100   & 1557  & 100   \\
		&   &     & 0.01 & 0 & -     & 0 & -     & 0 & -     & 5 & 93.32 & 3 & 240.8 & 5 & 5.63  & 1777  & 5.6   \\
		&   &     & 0.5  & 0 & -     & 0 & -     & 0 & -     & 5 & 567.3 & 3 & 201.7 & 5 & 3.78  & 1785  & 6.8   \\
		&   &     & 0.99 & 0 & -     & 0 & -     & 0 & -     & 5 & 1019  & 5 & 64.81 & 5 & 0     & 1797  & 43.8  \\
		&   &     & 1    & 5 & 141.9 & 5 & 0     & 0 & -     & 5 & 1021  & 5 & 73.92 & 5 & 0     & 1797  & 51.8  \\ \hline
		& 6 & 5   & 0    & 0 & -     & 5 & 5.6   & 0 & -     & 5 & 8.2   & 0 & -     & 5 & 9.4   & 192.8 & 9.4   \\
		&   &     & 0.01 & 1 & 1657  & 5 & 3.15  & 0 & -     & 5 & 9.26  & 0 & -     & 5 & 2.77  & 221.4 & 2.8   \\
		&   &     & 0.5  & 5 & 168.3 & 5 & 0     & 0 & -     & 5 & 58.2  & 3 & 712.9 & 5 & 0.13  & 225.6 & 4.4   \\
		&   &     & 0.99 & 5 & 22.44 & 5 & 0     & 0 & -     & 5 & 111.4 & 5 & 25.75 & 5 & 0     & 225.6 & 4.4   \\
		&   &     & 1    & 5 & 1.55  & 5 & 0     & 0 & -     & 5 & 105.8 & 5 & 16.26 & 5 & 0     & 225.6 & 5.6   \\ \hline
		&   & 25  & 0    & 0 & -     & 5 & 53.8  & 0 & -     & 5 & 61.4  & 0 & -     & 5 & 52.2  & 959.8 & 52.2  \\
		&   &     & 0.01 & 0 & -     & 5 & 17.91 & 0 & -     & 5 & 59.87 & 0 & -     & 5 & 9.96  & 1087  & 9.8   \\
		&   &     & 0.5  & 1 & 1769  & 5 & 6.5   & 0 & -     & 5 & 345.1 & 0 & -     & 5 & 4.35  & 1120  & 21.2  \\
		&   &     & 0.99 & 5 & 39.38 & 5 & 0     & 0 & -     & 5 & 618.7 & 5 & 39.06 & 5 & 0     & 1125  & 33.4  \\
		&   &     & 1    & 5 & 4.94  & 5 & 0     & 0 & -     & 5 & 635.6 & 5 & 30.87 & 5 & 0     & 1125  & 36    \\ \hline
		&   & 100 & 0    & 0 & -     & 5 & 258   & 0 & -     & 5 & 254.6 & 0 & -     & 5 & 204.4 & 3955  & 204.4 \\
		&   &     & 0.01 & 0 & -     & 5 & 89.05 & 0 & -     & 5 & 237.3 & 0 & -     & 5 & 6.59  & 4456  & 6     \\
		&   &     & 0.5  & 0 & -     & 4 & 35.75 & 0 & -     & 5 & 1397  & 1 & 1232  & 5 & 11.38 & 4507  & 27.2  \\
		&   &     & 0.99 & 5 & 124.3 & 5 & 0     & 0 & -     & 5 & 2506  & 5 & 88.97 & 5 & 0     & 4540  & 105.2 \\
		&   &     & 1    & 5 & 14.06 & 5 & 0     & 0 & -     & 5 & 2563  & 5 & 106.9 & 5 & 0     & 4540  & 105.2 \\ \hline
		32 & 4 & 25  & 0    & 0 & -     & 0 & -     & 0 & -     & 5 & 223.6 & 0 & -     & 5 & 182.4 & 2908  & 182.4 \\
		&   &     & 0.01 & 0 & -     & 0 & -     & 0 & -     & 5 & 195.6 & 1 & 914.8 & 5 & 46.71 & 3329  & 47    \\
		&   &     & 0.5  & 0 & -     & 0 & -     & 0 & -     & 5 & 1238  & 1 & 1423  & 5 & 25.34 & 3338  & 42.6  \\
		&   &     & 0.99 & 0 & -     & 0 & -     & 0 & -     & 5 & 2317  & 5 & 437.2 & 5 & 0     & 3356  & 95.8  \\
		&   &     & 1    & 0 & -     & 0 & -     & 0 & -     & 5 & 2316  & 5 & 508.4 & 5 & 0     & 3356  & 95.8  \\ \hline
		& 8 & 25  & 0    & 0 & -     & 5 & 62.6  & 0 & -     & 5 & 67.6  & 0 & -     & 5 & 62.2  & 1256  & 62.2  \\
		&   &     & 0.01 & 0 & -     & 5 & 23.44 & 0 & -     & 5 & 72.39 & 0 & -     & 5 & 13.42 & 1482  & 13.2  \\
		&   &     & 0.5  & 0 & -     & 5 & 15.2  & 0 & -     & 5 & 519.8 & 0 & -     & 5 & 8.75  & 1515  & 20.4  \\
		&   &     & 0.99 & 5 & 216.1 & 5 & 0     & 0 & -     & 5 & 918   & 5 & 113.8 & 5 & 0     & 1519  & 32.4  \\
		&   &     & 1    & 5 & 15.17 & 5 & 0     & 0 & -     & 5 & 943.6 & 5 & 110.5 & 5 & 0     & 1519  & 33.8 
	\end{tabular}
\end{table}

\begin{landscape}
	\begin{table}[h!]
		\centering
		\tiny
		\caption{Comparing solutions with and without characteristic constraints}
		\label{tab:CharCons}
		\begin{tabular}{lll|llll|llll|llll|llll}
			&&&
			\multicolumn{4}{c|}{\textbf{G1}} &
			\multicolumn{4}{c|}{\textbf{G2}} &
			\multicolumn{4}{c|}{\textbf{G3}} &
			\multicolumn{4}{c}{\textbf{G4}}
			\\
			&&&
			\multicolumn{2}{c}{W/out Cons} &
			\multicolumn{2}{c|}{W/ Cons} &
			\multicolumn{2}{c}{W/out Cons} &
			\multicolumn{2}{c|}{W/ Cons} &
			\multicolumn{2}{c}{W/out Cons} &
			\multicolumn{2}{c|}{W/ Cons} &
			\multicolumn{2}{c}{W/out Cons} &
			\multicolumn{2}{c}{W/ Cons} 
			\\
			$n$ &
			$m$ &
			$\alpha$ &
			$\bar{r}$ &
			$\bar{u}$ &
			$\bar{r}$ &
			$\bar{u}$ &
			$\bar{r}$ &
			$\bar{u}$ &
			$\bar{r}$ &
			$\bar{u}$ &
			$\bar{r}$ &
			$\bar{u}$ &
			$\bar{r}$ &
			$\bar{u}$ &
			$\bar{r}$ &
			$\bar{u}$ &
			$\bar{r}$ &
			$\bar{u}$
			\\ \hline \hline
			8  & 2 & 0.01 & 0     & 304.2  & 4.6   & 303.6  & 0     & 305    & 11.4  & 304.2  & 0    & 329.8  & 0    & 324.2  & 0    & 332.2  & 3    & 315.2  \\
			&   & 0.5  & 0     & 304.2  & 4.6   & 303.6  & 0     & 305    & 11.6  & 305    & 0    & 329.8  & 0    & 324.2  & 0    & 332.2  & 3    & 315.2  \\
			&   & 0.99 & 36.6  & 319    & 41.4  & 316    & 29.6  & 315.8  & 37    & 312.4  & 2.8  & 330    & 5.2  & 325.8  & 7.4  & 335.8  & 8.6  & 316    \\ \hline
			& 4 & 0.01 & 0     & 109    & 0     & 100.2  & 2     & 106.6  & 0     & 99.2   & 1    & 126.6  & 0    & 110.8  & 0.6  & 121.6  & 0    & 92.6   \\
			&   & 0.5  & 1     & 110.6  & 0     & 100.2  & 2.4   & 110.4  & 0     & 99.2   & 1.6  & 127.8  & 0    & 110.8  & 1.6  & 124.8  & 0.2  & 93     \\
			&   & 0.99 & 9.8   & 113.4  & 3.6   & 102.2  & 8.6   & 113.2  & 0     & 99.2   & 4    & 128.8  & 0.8  & 111    & 1.6  & 124.8  & 0.2  & 93     \\ \hline
			16 & 4 & 0.01 & 3.4   & 633.4  & 13.2  & 630.2  & 6.6   & 636.2  & 2.4   & 639.6  & 0    & 713.4  & 0    & 709    & 0    & 704    & 4.2  & 663.8  \\
			&   & 0.5  & 12.8  & 651.8  & 19.2  & 642.8  & 14    & 652.8  & 19.4  & 656.8  & 4.6  & 718    & 1.8  & 711.4  & 1.6  & 710.4  & 8.6  & 674.6  \\
			&   & 0.99 & 47    & 666.6  & 52.2  & 657.8  & 44.4  & 668    & 38.2  & 666.2  & 11.6 & 720.8  & 12.4 & 714.6  & 21.8 & 723.2  & 23.2 & 679.8  \\ \hline
			24 & 2 & 0.01 & 0     & 3312.8 & 303.8 & 3402.4 & 0     & 3305.2 & 314.6 & 3400.8 & 0    & 3516   & 0    & 3516   & 0    & 3677.6 & 0    & 3668.8 \\
			&   & 0.5  & 0     & 3312.8 & 254.4 & 3402.6 & 0     & 3305.2 & 308   & 3405.6 & 0    & 3516   & 0    & 3516   & 0    & 3677.6 & 12.4 & 3672.8 \\
			&   & 0.99 & 370.4 & 3428.8 & 370.4 & 3428.8 & 369.6 & 3418.6 & 369.6 & 3418.6 & 0    & 3516   & 0    & 3516   & 10.4 & 3680.4 & 22.8 & 3675.6 \\ \hline
			& 3 & 0.01 & 0     & 2094.2 & 156.4 & 2207.4 & 18.4  & 2095.8 & 141.4 & 2214.6 & 0    & 2347.2 & 0    & 2347.2 & 0    & 2452.2 & 10.4 & 2385.2 \\
			&   & 0.5  & 0     & 2094.2 & 150.8 & 2220.2 & 27.6  & 2124.4 & 150.4 & 2229.2 & 0    & 2347.2 & 0    & 2347.2 & 0    & 2452.2 & 10.6 & 2388   \\
			&   & 0.99 & 209.2 & 2245.6 & 209.2 & 2245.6 & 202.6 & 2246.8 & 202.6 & 2246.8 & 0    & 2347.2 & 0    & 2347.2 & 35.6 & 2461.6 & 72.6 & 2404.6 \\ \hline
			& 4 & 0.01 & 44.8  & 1509.6 & 81.6  & 1610.8 & 43.6  & 1525.4 & 73.4  & 1617.4 & 0    & 1755.8 & 0    & 1755.4 & 5.6  & 1777   & 2    & 1732   \\
			&   & 0.5  & 75.8  & 1624.6 & 75.6  & 1621.2 & 74.8  & 1622.4 & 74.6  & 1624.6 & 0    & 1755.8 & 0    & 1755.4 & 6.8  & 1784.8 & 2    & 1732   \\
			&   & 0.99 & 126.6 & 1636.8 & 130.6 & 1635.6 & 118   & 1641.6 & 118   & 1641.6 & 6.6  & 1757.2 & 10.8 & 1757.2 & 43.8 & 1797.4 & 50   & 1747.8 \\ \hline
			& 6 & 0.01 & 20.4  & 999    & 27.2  & 995.2  & 22.2  & 969.4  & 24    & 985.8  & 0    & 1118.2 & 0    & 1102.6 & 9.8  & 1087.2 & 18.6 & 1034.6 \\
			&   & 0.5  & 28.4  & 1009.8 & 29.2  & 1004   & 37.4  & 1010   & 33.8  & 1002.2 & 4.2  & 1124.8 & 4.2  & 1107.4 & 21.2 & 1120   & 24   & 1046.4 \\
			&   & 0.99 & 56.2  & 1021.8 & 58.8  & 1016.2 & 58.4  & 1017.4 & 57.4  & 1011   & 25.6 & 1128.2 & 18.8 & 1113.8 & 33.4 & 1125.2 & 27.8 & 1047.4 \\ \hline
			32 & 4 & 0.01 & 66.4  & 2769.4 & 210.2 & 3031.4 & 21.6  & 2792.4 & 220.4 & 3002   & 0    & 3204.4 & 0    & 3205.2 & 47   & 3329.2 & 44.6 & 3237.2 \\
			&   & 0.5  & 171.4 & 2984   & 216.2 & 3034.4 & 172   & 2963.4 & 214.2 & 3011.2 & 0    & 3205.2 & 0    & 3205.2 & 42.6 & 3338.2 & 62.2 & 3271   \\
			&   & 0.99 & 240.8 & 3028.4 & 228.6 & 3036.2 & 247.4 & 3010.6 & 243.6 & 3019.6 & 16.4 & 3207.6 & 16.4 & 3207.6 & 95.8 & 3356.2 & 116  & 3286.8 \\ \hline
			& 8 & 0.01 & 27    & 1298   & 33.2  & 1323.4 & 21.8  & 1317.4 & 28.2  & 1348.4 & 1    & 1529.6 & 1.4  & 1504.8 & 13.2 & 1481.8 & 18.6 & 1409.2 \\
			&   & 0.5  & 45.8  & 1377.8 & 42.4  & 1366   & 34.8  & 1376.2 & 37.2  & 1376.6 & 12.6 & 1550.4 & 4.8  & 1528   & 20.4 & 1515.2 & 21.4 & 1419.4 \\
			&   & 0.99 & 64.4  & 1385.4 & 64.8  & 1377.8 & 64    & 1389.4 & 55    & 1384   & 20.6 & 1554   & 23   & 1535.4 & 32.4 & 1519.4 & 29.8 & 1422.2
		\end{tabular}
	\end{table}
\end{landscape}

\end{document}